\documentclass[a4paper]{amsart}
\usepackage{color}
\usepackage{amssymb}
\usepackage{comment}
\usepackage{graphicx}
\usepackage{pstricks}
\usepackage{accents}
\usepackage{xspace}

\numberwithin{equation}{section}

\theoremstyle{plain}
\newtheorem{thm}{Theorem}[section]
\newtheorem{cor}[thm]{Corollary}
\newtheorem{prop}[thm]{Proposition}
\newtheorem{lem}[thm]{Lemma}

\theoremstyle{definition}
\newtheorem{rem}[thm]{Remark}
\newtheorem{definition}[thm]{Definition}


\newcommand{\R}{\mathbb{R}}
\newcommand{\diam}{\mathrm{diam}}
\newcommand{\dist}{\mathrm{dist}}
\newcommand{\tri}{\mathcal{T}}              
\newcommand{\faces}{\mathcal{E}_\mathrm{i}} 
\newcommand{\Afaces}{\mathcal{E}}           
\newcommand{\Bfaces}{\mathcal{E}_\partial}  
\newcommand{\el}{K}                         
\newcommand{\fa}{E}                         
\newcommand{\pair}[1]{\omega_\tri(#1)}      
\newcommand{\Mpair}[1]{\omega_\star(#1)}    
\newcommand{\SetPair}{\mathcal{W}_\tri}     
\newcommand{\Sets}{covering}                
\newcommand{\Patch}[1]{\widetilde{\omega}_\tri(#1)}     
\newcommand{\Skeleton}[1]{\gamma_{\tri}(#1)}            
\newcommand{\skeleton}[1]{\bar{\gamma}_{\tri}(#1)}      
\newcommand{\ShapePar}{\sigma}                          
\newcommand{\FEspace}{S}                        
\newcommand{\LFEspace}[1]{\FEspace|_{#1}}       

\newcommand{\lNnn}{\lVert {\hskip -0.1em} \lvert}
\newcommand{\rNnn}{\rVert {\hskip -0.1em} \rvert}
\newcommand{\tnorm}[1]{\lNnn #1 \rNnn}      
\newcommand{\norm}[1]{\left\|#1\right\|}    
\newcommand{\grd}{\nabla}
\newcommand{\veps}{\varepsilon}

\newcommand{\RitzP}[1]{\mathcal{R}^{\veps}_{#1}}    
\newcommand{\LtwoP}[1]{\mathcal{P}_#1}      
\newcommand{\BestApp}[1]{\mathcal{Q}_{#1}}  
\newcommand{\interp}{\Pi}                   

\newcommand{\simp}{T}
\newcommand{\elAv}{K_z}

\newcommand{\Cnorm}{C_N}                    
\newcommand{\cnorm}{c_N}
\newcommand{\constant}{M}
\newcommand{\Cloc}{C_{\mathrm{loc}}}

\newcommand{\hE}[1]{h_{\pair{#1}}}

\newcommand{\err}[1]{e(#1)}                 
\newcommand{\Err}{E}                        
\newcommand{\tree}{B}                       
\newcommand{\mtree}{B^{*}}                  
\newcommand{\leaves}[1]{\mathcal{L}(#1)}    
\newcommand{\intNodes}[1]{N(#1)}            

\newcommand{\inters}[1]{\widetilde{#1}}
\newcommand{\ed}{S}

\begin{document}
\title[Robust Error Localization in the Reaction-Diffusion Norm]%
{Robust Localization of the Best Error with Finite Elements in the 
Reaction-Diffusion Norm}
\author[F.~Tantardini]{Francesca Tantardini}
\address[Francesca Tantardini and Andreas Veeser]
  {Dipartimento di Matematica\\
  Universit\`a degli Studi di Milano\\
  Via Saldini 50\\
  20133 Milano\\
  Italy}
\email[Francesca Tantardini]{francesca.tantardini1@unimi.it}
\author[A.~Veeser]{Andreas Veeser}
\email[Andreas Veeser]{andreas.veeser@unimi.it}
\urladdr[Andreas Veeser]{users.mat.unimi.it/users/veeser/}
\author[R.~Verf\"urth]{R\"udiger Verf\"urth}
\address[R\"udiger Verf\"urth]
  {Fakult\"at f\"ur Mathematik\\
  Ruhr-Universit\"at Bochum\\
  Uni\-ver\-si\-t\"ats\-stra{\ss}e 150\\
  44801 Bochum\\
  Germany}
\email[R\"udiger Verf\"urth]{ruediger.verfuerth@ruhr-uni-bochum.de}
\urladdr[R\"udiger Verf\"urth]{www.ruhr-uni-bochum.de/num1/}
\begin{abstract}
We consider the approximation in the reaction-diffusion norm with continuous 
finite elements and prove that the best error is equivalent to a 
sum of the local best errors on pairs of elements.  The equivalence 
constants do not depend on the ratio of diffusion to reaction.  As 
application, we derive local error functionals that ensure robust performance 
of adaptive tree approximation in the reaction-diffusion norm. 
\end{abstract}

\keywords{localization of best errors, robustness for reaction-diffusion, 
adaptive tree approximation}
\subjclass{41A15, 41A63, 41A05, 65N30, 65N15}

\maketitle

\section{Introduction}
%
%
Finite element methods are well-established for the numerical solution of 
elliptic and parabolic problems.  An important aspect in their mathematical 
understanding and foundation are the approximation properties of finite 
elements spaces.  In view of adaptive mesh refinement, the local features of 
the latter under minimal regularity assumptions are of interest.

The most basic finite element approach to the homogeneous Dirichlet problem 
for Poisson's equation leads to the following approximation problem:  
Approximate a function $u\in H^1_0(\Omega)$ in the $H^1$-seminorm with 
functions from a space $\FEspace$ consisting of continuous piecewise 
polynomials of degree $\leq\ell$ associated with a given simplicial mesh 
$\tri$.  
In this context one of the authors \cite{Veeser:13} proved that 
\begin{equation}
\label{E:H1dec}
 \inf_{v\in \FEspace}\norm{\grd(u-v)}_\Omega
 \approx
 \left(
  \sum_{\el\in\tri}\inf_{P\in\mathbb{P}_\ell(\el)}\norm{\grd(u-P)}^2_\el
 \right)^{1/2},
\end{equation}
i.e.\ the global best error is equivalent to the $\ell_2$-norm of the local 
best errors on elements.  Notice that the right-hand side does not involve any 
coupling between elements and that no additional regularity of $u$ is invoked.  
If $u$ disposes of additional piecewise regularity, this result and the 
Bramble-Hilbert Lemma readily imply error bounds.  Moreover, it shows that 
adaptive tree approximation \cite{Binev.DeVore:04} by P.~Binev and 
R.~DeVore with the local best errors as error functionals yields near best 
meshes for the global best error on the left-hand side.

In view of problems with extreme parameters, it is important that approximation 
properties are robust.  An important and basic example for such a problem is 
given by reaction-dominated diffusion, whose stationary variant is also of 
interest in the discretization of the heat equation.  In this context the 
$H^1$-seminorm in \eqref{E:H1dec} is replaced by the so-called 
reaction-diffusion norm
\begin{equation}\label{rd-norm}
 \tnorm{\cdot}^2
 :=
 \norm{\cdot}^2+\veps\norm{\grd\cdot}^2,
 \qquad\text{with }
 \veps>0,
\end{equation}
and one is interested in a variant of \eqref{E:H1dec} where the hidden 
constants are independent of the parameter $\veps$.  The exact counterpart of 
\eqref{E:H1dec} for the reaction-diffusion norm cannot be robust; this arises 
from the fact that, for $\veps=0$, a discontinuous piecewise constant function 
yields $0$ for the sum of the local best errors, but not in general for the 
global best error.  We find the following robust replacements:
\begin{equation}
\label{replacements}
\begin{aligned}
 \inf_{v\in \FEspace}\tnorm{u-v}_\Omega
 &\approx
 \left(\sum_{\fa\in\faces}
  \inf_{P\in\LFEspace{\pair{\fa}}} \tnorm{u-P}^2_{\pair{\fa}}\right)^{1/2}
\\
 &\lesssim \left(
  \sum_{\el\in\tri}\inf_{P\in\mathbb{P}_\ell(\el)}
   \left(
    \tnorm{u-P}^2_\el
    + \frac{|\el|}{|\partial\el|} \norm{u-P}^2_{\partial\el}
   \right)
 \right)^{1/2},
\end{aligned}
\end{equation}
where the infima in the first sum are localized to continuous piecewise 
polynomials on pairs of elements $\pair{\fa}$ sharing an internal face 
$\fa\in\faces$. Notice that the second sum is ready to apply the 
Bramble-Hilbert lemma, while the first one is not.  The second sum however does 
not provide a robust lower bound of the global best error on the left-hand 
side. The reason lies in the fact that, for $\veps=0$, it requires an 
additional $1/2$-derivative.  Accordingly, adaptive tree approximation with the 
local contributions of the second sum cannot provide near best meshes for the 
reaction-diffusion in a robust manner.  The local contributions of the first 
sum are also not suitable for adaptive tree approximation, but for another 
reason: they do not allow to define local error functionals depending solely 
on the target function and a given element.   Adopting however the idea of 
minimal rings in \cite{Binev.DeVore:04} to pairs, we provide a modification of 
the first sum that is suitable for tree approximation.

The article is organized as follows. In \S\ref{S:counterexample} we show that 
the hidden constant of the nontrivial inequality of \eqref{E:H1dec} for the 
reaction-diffusion norm blows up for $\veps\searrow0$.  In \S\ref{S:aux_res} 
we fix notations, while in \S\ref{S:loc_int} we show that localization 
results like the first part of \eqref{replacements} follow from a suitable 
property of a quasi-interpolation operator.  This is exploited, in 
\S\ref{S:pair} and \S\ref{S:i4ta} respectively, to prove 
the first part of \eqref{replacements} and its counterpart for minimal 
pairs.  Section \S\ref{alternative} analyzes the non-robustness of
\S\ref{S:counterexample} more precisely, thereby deriving an alternative 
way to compute the local best errors in the first part of 
\eqref{replacements} and showing its second part. Finally, we extend in 
\S\ref{S:H10} our results to conforming approximation of functions with 
vanishing boundary values.

\section{Decoupling of elements is not robust}
\label{S:counterexample}
%
%
The purpose of this section is to show that, for the reaction-diffusion norm 
$\tnorm{\cdot}$, the `$\lesssim$'-part in \eqref{E:H1dec} cannot hold with a 
constant independent of $\veps$. The counterexample provides functions 
$u_\veps\in H^1(\Omega)$ converging to a discontinuous function $u_0\notin 
H^1(\Omega)$ such that the global best error is bounded from below 
independently of $\veps$, while the local best errors decrease with 
$\sqrt[4]\veps$.

We consider the domain $\Omega=(-2,2)\times(-1,1)\subset\R^2$, with 
the subdomains $\Omega_+=(0,2)\times(-1,1)$ and $\Omega_-=(-2,0)\times(-1,1)$. 
Let $\tri$ be any conforming simplicial triangulation of $\Omega$, that is 
subordinate to the decomposition $\bar{\Omega} = \bar{\Omega}_+ \cup 
\bar{\Omega}_-$, and let $\FEspace$ be the space of continuous functions that 
are piecewise polynomial of degree at most $\ell$ with respect to $\tri$.  We 
denote by $\LtwoP{\tri}$ the $L^2$-projection onto $\FEspace$, by 
$\RitzP{\tri}$ the Ritz projection onto $\FEspace$ with respect to the 
reaction-diffusion norm and by $\RitzP{\el}$ the local counterpart of 
$\RitzP{\tri}$.  The functions $u_\veps\in H^1(\Omega)$ and $u_0\in 
L^2(\Omega)$ 
are given by
\begin{align*}
 u_\veps(x)
 =
 \begin{cases}
  -1 &\text{for }x_1<-\sqrt\veps,
 \\            
  \frac{x_1}{\sqrt\veps} &\text{for }-\sqrt\veps<x_1<\sqrt\veps,
 \\
  1 &\text{for }\sqrt\veps<x_1,
 \end{cases}
\qquad
 u_0(x)
 =
 \begin{cases}
  -1 &\text{for }x_1<0,
 \\
  1 &\text{for }0<x_1.
 \end{cases}
\end{align*}
On one hand, for the local best errors, $u_0\in\mathbb{P}_\ell(\el)$ for every 
$\el\in\tri$ implies
\begin{align*}
 \sum_{\el\in\tri}\tnorm{u_\veps-\RitzP{\el}u_\veps}^2_\el
 &\leq
 \sum_{\el\in\tri}\tnorm{u_\veps-u_0}_\el^2
\\
 &=
 \int_{-\sqrt\veps}^{\sqrt\veps}\int_{-1}^1\veps
  \left|\frac{\partial u_\veps}{\partial x_1}\right|^2
  + \int_{-\sqrt\veps}^{\sqrt\veps}\int_{-1}^1 
   \left|u_\veps-u_0\right|^2=\frac{16}{3}\sqrt\veps.
\end{align*}
On the other hand, for the global best error, there holds
\begin{align*}
 \tnorm{u_\veps-\RitzP{\tri}u_\veps}_{\Omega}
 &\geq
 \norm{u_\veps-\RitzP{\tri} u_\veps}_{L^2(\Omega)}
\\
 &\geq\norm{u_\veps-\LtwoP{\tri}u_\veps}_{L^2(\Omega)}
 \rightarrow
 \norm{u_0-\LtwoP {\tri}u_0}_{L^2(\Omega)}>0
\qquad \text{for }\veps\rightarrow 0,
\end{align*}
since $u_\veps$ converges to $u_0$ in the $L^2$-norm, the $L^2$-projection onto 
$\FEspace$ is continuous and $u_0\notin \FEspace$.  Consequently, the constant 
in the nontrivial inequality of \eqref{E:H1dec} for the reaction-diffusion norm
has to grow at least with $\veps^{-1/4}$.

This simple example reflects a more general situation. Consider in fact any 
couple of adjacent elements $\el$ and $\widetilde{\el}$. Taking 
$u_\veps=\min\{1,\veps^{-1/2}\dist(\mathbb{R}^d\setminus \el)\}$ and 
$u_0=\chi_\el$ and reasoning as above shows that the constant blows up as 
$\veps\searrow0$.   These observations suggest to modify \eqref{E:H1dec} for the
reaction-diffusion norm by invoking local best errors that incorporate the
continuity constraint  through a face.

\section{Meshes and basis functions}
\label{S:aux_res}
%
%
%
%

We denote by $\tri$ a  conforming simplicial mesh of a polyhedral domain 
$\Omega\subset\mathbb{R}^d$, by $\Afaces$ the set of its faces, and by $\faces$ 
the subset of $\Afaces$ of those faces which are not contained in the boundary 
$\partial\Omega$.  If $\el\in\tri$ is an element and $\fa\in\Afaces$ is a face, 
we write $|\el|$ and $|\fa|$ for its $d$-dimensional Lebesgue and 
$(d-1)$-dimensional Hausdorff measure, respectively.
For every face $\fa\in \Afaces$, the set
\[
 \pair{\fa} := \bigcup\{\el\in\tri: \partial\el\supseteq\fa \}
 \]
is the union of elements sharing the face $\fa$.  It consists of two elements 
if $\fa\in\faces$ and of one element otherwise.
We stress that $\fa$ belongs to various meshes and that $\pair{\fa}$ 
actually depends on $\tri$ too.

A collection $\mathcal{W}$ of subdomains of $\Omega$ is a 
\emph{$\beta$-finite \Sets} of $\tri$ if for every $\el\in\tri$
\begin{itemize}
 \item there exists $\omega\in\mathcal{W}$ with $\omega\supseteq\el$ and 
 \item there holds $\sum_{\omega\in\mathcal{W}} \chi_\omega\leq\beta$ on 
$\accentset{\circ}{\el}$,
\end{itemize}
where $\chi_\omega$ stands for the characteristic function of $\omega$.  The 
collections $\{\el\}_{\el\in\tri}$ and 
$\SetPair:=\{\pair{\fa}\}_{\fa\in\faces}$ in \eqref{E:H1dec} and 
\eqref{replacements} are 1-finite and $(d+1)$-coverings, respectively.  
Notice that $\beta$ arises in \eqref{E:H1dec} and \eqref{replacements} as 
multiplicative constant in the straight-forward inequality.  Another 
$\beta$-finite covering appears in \S\ref{S:i4ta}.

The space
\[
 \FEspace
 :=
 S^{\ell,0}(\tri)
 =
 \{v\in C^{0}(\Omega): \,
  v\in\mathbb{P}_\ell(\el), \,\forall\, \el\in\mathcal{\tri}\}
\]
consists of all continuous functions that are piecewise polynomial over $\tri$. 
Given a set $\omega\subset\Omega$, we indicate its restriction with
\[
 \LFEspace{\omega}
 :=
 \{v\in C^0(\omega):
 \exists \tilde{v}\in \FEspace, \ \tilde{v}|_{\omega}=v\}.
\]
In particular we have
$\LFEspace{\pair{\fa}}=\{v\in C^{0}(\pair{\fa}): \,v\in\mathbb{P}_\ell(\el), 
\,\forall \el\in\tri,\ \el\subseteq\pair{\fa}\}$.
Furthermore, we denote by $\mathcal{N}$ the set of nodes of $\FEspace$. A 
subscript $\el$, $\Omega$, etc.\ to $\mathcal{N}$ indicates that only the nodes 
contained in the index-set are considered. We denote by 
$\{\phi_z\}_{z\in\mathcal{N}}$ the nodal basis, that is, for every 
$z\in\mathcal{N}$,
\[
 \phi_z\in \FEspace
 \qquad\text{and}\qquad
 \forall y\in\mathcal{N} \quad \phi_{z}(y)=\delta_{yz}.
\]
Given an element $\el\in\tri$, the $L^2(\el)$-dual basis functions 
$\{\psi_z^\el\}_{z\in\mathcal{N}_\el}$ are such that
\[
 \int_\el\psi_z^\el\phi_y=\delta_{zy}.
\]
for every $y$, $z\in\mathcal{N}_\el$.  We thus have, for every 
$p\in\mathbb{P}_\ell(\el)$ and for every $z\in\mathcal{N}_\el$,
\begin{equation}
\label{psi_prop} p(z)=\int_\el p\psi_z^\el.
\end{equation}
We also recall some basic scaling properties of different norms of $\phi_z$ 
and $\psi_z^\el$. We denote by $\Hat{\el}:=\text{conv}\{0,e_1,\dots,e_d\}$ the 
reference $d$-simplex, by $\Hat{h}:=\diam(\Hat{\el})$ the diameter of 
$\Hat{\el}$, by $\{\Hat{\phi}_{\Hat{z}}\}$, $\{\Hat{\psi}_{\Hat{z}}\}$ 
respectively the basis and dual basis functions on $\Hat{\el}$, and by 
$\norm{\cdot}_\omega$ the $L^2$-norm on the set $\omega$. For every element 
$\el\in\tri$, there exists an affine transformation $F:\mathbb{R}^d\rightarrow 
\mathbb{R}^d$ with $F(\Hat{\el})=\el$, and $F(\Hat{z})=z$.
There may be different choices for $\Hat{z}$, which nevertheless lead to the 
same value of $\|\Hat{\phi}_{\Hat{z}}\|_{\Hat{\el}}$. However, 
$\|\nabla\Hat{\phi}_{\Hat{z}}\|_{\Hat{\el}}$ depends on the chosen node. For 
this reason, we take a $\Hat{z}$ with minimal sum of the coordinates, so that 
$\|\nabla\Hat{\phi}_{\Hat{z}}\|_{\Hat{\el}}$ is unique.
Since $\Hat{\psi}_{\Hat{z}}=(\det B)\psi_z^\el\circ F$, where $B$ is the 
non-singular matrix associated to $F$, the transformation rule and the proof 
of \cite[Theorems 15.1 and 15.2]{Ciarlet:91} imply
\begin{align}
\label{phi_scaling}
 \norm{\phi_z}_\el
 &=
 \frac{|\el|^{1/2}}{|\Hat{\el}|^{1/2}}\|\Hat{\phi}_{\Hat{z}}\|_{\Hat{\el}},
\\
 \label{psi_scaling}
 \norm{\psi_z^\el}_{\el}
 &=
 \frac{|\Hat{\el}|^{1/2}}{|\el|^{1/2} }\|\Hat{\psi}_{\Hat{z}}\|_{\Hat{\el}},
\\
 \label{phi_scaling_grad}
 \norm{\nabla\phi_z}_{\el}
 &\leq
 \frac{\Hat{h}|\el|^{1/2}}{\rho_\el|\Hat{\el}|^{1/2}} 
  \|\nabla\Hat{\phi}_{\Hat{z}} \|_{\Hat{\el}},
\end{align}
where $\rho_\el$ denotes the maximum diameter of a ball inscribed in $\el$.

\section{Localization and interpolation}
\label{S:loc_int}
%
%
In this section we reduce the problem of localizing the global best error to 
the problem of defining a global quasi-interpolation operator that is 
locally near best.  Roughly speaking, the latter means that the difference 
between the interpolant and a local best approximation is bounded, up to a 
constant, by a finite sum of local best errors.  The results of this section 
are used in \S\ref{S:pair} and \S\ref{S:i4ta}.

Let $\tnorm{\cdot}_{\Omega}$ be a norm on $H^1(\Omega)$ such that its square is
set-additive. Then there holds, e.g.\ $\tnorm{\cdot}^2_{\Omega} = 
\sum_{\el\in\mathcal{\tri}}\tnorm{\cdot}^2_\el$.  Moreover let $\mathcal{W}$ be
a $\beta$-finite \Sets. For every subdomain $\omega\in\mathcal{W}$, let
$\BestApp{\omega}:H^1(\omega)\rightarrow \LFEspace{\omega}$ be a local
operator which maps a function to a corresponding best approximation in
$\LFEspace{\omega}$ with respect to $\tnorm{\cdot}_\omega$.  We thus have 
\begin{equation}
\label{hpQ}
 \tnorm{u-\BestApp{\omega}u}_{\omega}
 =
 \inf_{v\in \LFEspace{\omega}}\tnorm{u-v}_{\omega}
\end{equation}
for all $\omega\in\mathcal{W}$ and every $u\in H^1(\omega)$.
\begin{definition}[Local near best interpolation]
\label{interp_prop}
An interpolation opertor $\interp$ is locally near best with respect to
$\tnorm{\cdot}_\omega$, $\omega\in\mathcal{W}$, if there are constants 
$\Cloc\geq0$ and $\alpha_1,\alpha_2\geq 1$ with the following property:
for every element $\el\in\tri$, there exists a set $A_\el\subset\mathcal{W}$ and
a subdomain $\omega\in A_\el$ such that $\omega\supseteq\el$ and
\begin{equation}
\label{hp1}
 \sum_{z\in \mathcal{N}_{\el}}
  |\BestApp{\omega}u(z)-\interp u(z)| \, \tnorm{\phi_z}_{\el}
 \leq
 \Cloc \sum_{\omega'\in A_\el}\tnorm{u-\BestApp{\omega'}u}_{\omega'},
\end{equation}
where the set $A_\el$ consists of ``neighbouring'' subdomains, subject to the
following two conditions:
\begin{itemize}
\item there holds $\# A_\el\leq\alpha_1$ for every $\el\in\tri$,
\item for every $\omega\in\mathcal{W}$, there are at most $\alpha_2$ elements
$\el'\in\tri$ with $\omega\in A_{\el'}$.
\end{itemize}
\end{definition}
The conditions on the sets $A_\el$, $\el\in\tri$, help that local near best
interpolation entails global near best interpolation.   This reveals the proof 
of the 
following proposition.
\begin{prop}[Localization by interpolation]
\label{P:key_estimate}
If there exists an interpolation operator into $\FEspace$ that is locally near 
best with respect to $\tnorm{\cdot}_\omega$, $\omega\in\mathcal{W}$, then 
\begin{equation}
\label{th1}
 \inf_{v\in \FEspace}\tnorm{u-v}_{\Omega}
 \leq 
 C\left(
  \sum_{\omega\in\mathcal{W}}
   \inf_{v\in \LFEspace{\omega}}\tnorm{u-v}_{\omega}^2
   \right)^{1/2}
 \leq
 C\beta \inf_{v\in \FEspace}\tnorm{u-v}_{\Omega}
\end{equation}
with $C=\sqrt{\alpha_1\alpha_2}(1+\Cloc)$ for every $u\in H^1(\Omega)$.
\end{prop}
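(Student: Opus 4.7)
The plan is to handle the two inequalities separately. The right inequality follows quickly from the $\beta$-finiteness of $\mathcal{W}$: if $v^\star\in\FEspace$ is a (near-) best approximation of $u$, then for every $\omega\in\mathcal{W}$ one has $\inf_{v\in\LFEspace{\omega}}\tnorm{u-v}_\omega\leq\tnorm{u-v^\star}_\omega$, and the covering condition $\sum_{\omega\in\mathcal{W}}\chi_\omega\leq\beta$ combined with set-additivity of $\tnorm{\cdot}^2$ yields $\sum_{\omega\in\mathcal{W}}\tnorm{u-v^\star}_\omega^2\leq\beta\,\tnorm{u-v^\star}_\Omega^2$, which after taking square roots gives the right inequality.

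The left inequality is the substantive one, and the strategy is to bound $\tnorm{u-\interp u}_\Omega$ from above by the $\ell^2$-sum of local best errors, using that $\inf_{v\in\FEspace}\tnorm{u-v}_\Omega\leq\tnorm{u-\interp u}_\Omega$. Fix $\el\in\tri$ and, via Definition \ref{interp_prop}, a subdomain $\omega\in A_\el$ with $\omega\supseteq\el$. The triangle inequality gives
\[
 \tnorm{u-\interp u}_\el
 \leq
 \tnorm{u-\BestApp{\omega}u}_\el + \tnorm{\BestApp{\omega}u-\interp u}_\el.
\]
Because $\BestApp{\omega}u|_\el$ and $\interp u|_\el$ both belong to $\mathbb{P}_\ell(\el)$, the second term can be expanded in the nodal basis $\{\phi_z\}_{z\in\mathcal{N}_\el}$ and bounded via the triangle inequality by $\sum_{z\in\mathcal{N}_\el}|\BestApp{\omega}u(z)-\interp u(z)|\,\tnorm{\phi_z}_\el$. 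Applying the locally near best hypothesis \eqref{hp1} and using $\tnorm{u-\BestApp{\omega}u}_\el\leq\tnorm{u-\BestApp{\omega}u}_\omega$ together with $\omega\in A_\el$ then yields
\[
 \tnorm{u-\interp u}_\el \leq (1+\Cloc)\sum_{\omega'\in A_\el}\tnorm{u-\BestApp{\omega'}u}_{\omega'}.
\]

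The remaining work is combinatorial bookkeeping. Squaring and applying Cauchy-Schwarz with $\#A_\el\leq\alpha_1$ gives $\tnorm{u-\interp u}_\el^2\leq(1+\Cloc)^2\alpha_1\sum_{\omega'\in A_\el}\tnorm{u-\BestApp{\omega'}u}_{\omega'}^2$. Summing over $\el\in\tri$ via set-additivity of $\tnorm{\cdot}^2$ and swapping the order of summation, noting that each $\omega'\in\mathcal{W}$ is counted by at most $\alpha_2$ elements, produces
\[
 \tnorm{u-\interp u}_\Omega^2
 \leq
 (1+\Cloc)^2\alpha_1\alpha_2 \sum_{\omega\in\mathcal{W}}\tnorm{u-\BestApp{\omega}u}_\omega^2,
\]
which delivers the left inequality with the stated constant $C=\sqrt{\alpha_1\alpha_2}(1+\Cloc)$. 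The only step requiring genuine insight is recognizing that the nodal-basis expansion on $\el$ is what converts the norm $\tnorm{\BestApp{\omega}u-\interp u}_\el$ into a form that can absorb the pointwise-at-nodes hypothesis \eqref{hp1}; the rest is triangle inequality, Cauchy-Schwarz, and counting against $\alpha_1$, $\alpha_2$, $\beta$.
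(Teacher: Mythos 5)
Your proposal is correct and follows essentially the same route as the paper's own proof: bound the best error by the interpolation error, split elementwise via the triangle inequality and the nodal-basis expansion to invoke \eqref{hp1}, then count with $\alpha_1$, $\alpha_2$ for the first inequality and with $\beta$ for the second. The only cosmetic difference is that you make explicit the step $\tnorm{u-\BestApp{\omega}u}_{\el}\leq\tnorm{u-\BestApp{\omega}u}_{\omega}$ and the absorption of this term into the sum over $A_\el$, which the paper leaves implicit.
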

\begin{proof}
Let $\interp:H^1(\Omega)\to\FEspace$ be an interpolation operator that is 
locally near best with respect to $\tnorm{\cdot}_\omega$, 
$\omega\in\mathcal{W}$.  Bounding the infimum on the left-hand side of 
\eqref{th1} by $\tnorm{u-\interp u}_{\Omega}$ and writing the norm as 
a sum over elements results in
\[
 \inf_{v\in \FEspace}\tnorm{u-v}_{\Omega}
 \leq
 \tnorm{u-\interp u}_{\Omega}
 =
 \left(
  \sum_{\el\in\mathcal\tri}\tnorm{u-\interp u}_{\el}^2
 \right)^{1/2}.
\]
Fix an element $\el\in\tri$ and choose a subdomain $\omega\supseteq\el$ as in 
Definition \ref{interp_prop}.  The triangle inequality then yields
\[
 \tnorm{u-\interp u}_{\el}
 \leq
 \tnorm{u-\BestApp{\omega} u}_{\el}
 +
 \tnorm{\interp u-\BestApp{\omega} u}_{\el}.
\]
Since both $\interp u|_{\el}$ and $\BestApp{\omega} u|_{\el}$ are in 
$\mathbb{P}_\ell(\el)$, we can represent them in terms of the local nodal basis 
$\{\phi_z\}_{z\in\mathcal{N}_{\el}}$ and obtain
\[
 \tnorm{\interp u-\BestApp{\omega} u}_{\el}
 \leq
 \sum_{z\in \mathcal{N}_{\el}}
  |\BestApp{\omega} u(z) - \interp u(z)| \, \tnorm{\phi_z}_{\el}.
\]
Using \eqref{hp1} and inserting back up to the first inequality, we get
\[
 \inf_{v\in \FEspace}\tnorm{u-v}_{\Omega}
 \leq
 \left(
  \sum_{\el\in\tri}\left[
   (1+\Cloc) \sum_{\omega'\in A_\el} 
    \tnorm{u-\BestApp{\omega'}u}_{\omega'}
   \right]^2
 \right)^{1/2}.
\]
As $\# A_\el\leq\alpha_1$ we can use $\left(\sum_{i=1}^n a_i\right)^2\leq 
n\sum_{i=1}^na_i^2$ with $n\leq\alpha_1$.  Moreover every 
$\omega\in\mathcal{W}$ belongs to at most $\alpha_2$ of the sets $A_{\el}$ 
and so, upon rearranging terms, we arrive at
\[
 \inf_{v\in \FEspace}\tnorm{u-v}_{\Omega}
 \leq
 \sqrt{\alpha_1\alpha_2}(1+\Cloc)
 \left(
  \sum_{\omega\in\mathcal{W}}
  \tnorm{u-\BestApp{\omega}u}_{\omega}^2
 \right)^{1/2}.
\]
In view of \eqref{hpQ}, this proves the first inequality in \eqref{th1}.  To 
verify the second one, take $v\in\FEspace$, observe 
$\tnorm{u-\BestApp{\omega}u}_{\omega}\leq\tnorm{u-v}_{\omega}$ for any 
subdomain $\omega\in\mathcal{W}$ and recall that every element $\el\in\tri$ 
appears in at most $\beta$ subdomains $\omega\in\mathcal{W}$.\qedhere
\end{proof}

Our task is now reduced to find an operator $\interp$ that is locally near 
best.  Since point values are in general not defined for the 
norms of our interest, the definition of $\interp$ below in \S\ref{S:L2-norm} 
typically entails that $\interp u|_{\el}$ on some element $\el\in\tri$ depends 
also on $u|_{\widetilde{\el}}$ on certain other elements $\widetilde{\el}$.  To 
deal with this dislocation, we invoke suitable paths of overlapping subdomains. 
The following proposition applies also to the covering in \S\ref{S:i4ta}, whose 
subdomains are in general not unions of elements of the mesh $\tri$.

\begin{prop}[Dislocation control]
\label{P:path}
Let $\mathcal{W}$ be a $\beta$-finite {\Sets} of $\tri$ and $\el, 
\widetilde{\el}\in\tri$ two elements sharing a node $z\in\mathcal{N}$. 
If there exist a finite sequence $\{\omega_j\}_{j=1}^n\subset\mathcal{W}$ 
and $\nu\in(0,1]$ such that
\begin{itemize}
\item $\omega_1\supseteq\el$ and $\omega_n\supseteq\widetilde{\el}$,
\item any intersection $\omega_j\cap\omega_{j+1}$ is a simplex $\simp_j$ 
containing $z$ and there is an element $\el_j\in \tri$ which again contains 
$z$ and satisfies $|\simp_j|\geq\nu|\el_j|$,
\end{itemize}
then
\begin{align*}
 \left|
  \BestApp{\omega_1}u(z)-\int_{\widetilde{\el}}u\psi_z^{\widetilde{\el}}
 \right|
 \leq
 2\nu^{-1/2}\|\Hat{\psi}_{\Hat{z}}\|_{\Hat{\el}}
 \sum_{j=1}^{n}
  \frac{|\Hat{\el} |^{1/2}}{|\el_j|^{1/2}}
  \norm{\BestApp{\omega_j}u-u}_{L^2(\omega_j)}.
\end{align*}
with $\el_n=\widetilde{\el}$.
\end{prop}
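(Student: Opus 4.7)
The natural approach is to telescope $\BestApp{\omega_1}u(z) - \int_{\widetilde{\el}}u\,\psi_z^{\widetilde{\el}}$ along the chain of subdomains, exploiting on each overlap $\simp_j$ the dual basis identity \eqref{psi_prop} applied to the best approximations $v_j := \BestApp{\omega_j}u$. Since $v_j$ and $v_{j+1}$ both restrict to polynomials of degree $\leq\ell$ on the simplex $\simp_j$ (which sits inside a single mesh element contained in both $\omega_j$ and $\omega_{j+1}$), and $z$ is a $\mathbb{P}_\ell$-node of $\simp_j$, the identity yields $v_j(z) = \int_{\simp_j}v_j\,\psi_z^{\simp_j}$ and $v_{j+1}(z) = \int_{\simp_j}v_{j+1}\,\psi_z^{\simp_j}$ for the $L^2(\simp_j)$-dual basis function $\psi_z^{\simp_j}$; analogously $v_n(z) = \int_{\widetilde{\el}}v_n\,\psi_z^{\widetilde{\el}}$ since $\widetilde{\el}\subseteq\omega_n$.

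I then write the telescoping identity
\[
 \BestApp{\omega_1}u(z) - \int_{\widetilde{\el}}u\,\psi_z^{\widetilde{\el}}
 = \sum_{j=1}^{n-1}\bigl(v_j(z) - v_{j+1}(z)\bigr) + \int_{\widetilde{\el}}(v_n - u)\,\psi_z^{\widetilde{\el}},
\]
where the $j$-th summand equals $\int_{\simp_j}\bigl[(v_j - u) + (u - v_{j+1})\bigr]\psi_z^{\simp_j}$. Applying Cauchy-Schwarz on $\simp_j$ and enlarging the $L^2$-norms to $\omega_j$ and $\omega_{j+1}$, respectively, bounds it by $\bigl(\norm{u - v_j}_{L^2(\omega_j)} + \norm{u - v_{j+1}}_{L^2(\omega_{j+1})}\bigr)\norm{\psi_z^{\simp_j}}_{\simp_j}$, while the final term is controlled by $\norm{u - v_n}_{L^2(\omega_n)}\norm{\psi_z^{\widetilde{\el}}}_{\widetilde{\el}}$. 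By the scaling \eqref{psi_scaling} applied on $\simp_j$, one has $\norm{\psi_z^{\simp_j}}_{\simp_j} = |\Hat{\el}|^{1/2}|\simp_j|^{-1/2}\norm{\Hat{\psi}_{\Hat{z}}}_{\Hat{\el}}$, and the hypothesis $|\simp_j|\geq\nu|\el_j|$ gives $\norm{\psi_z^{\simp_j}}_{\simp_j}\leq\nu^{-1/2}|\Hat{\el}|^{1/2}|\el_j|^{-1/2}\norm{\Hat{\psi}_{\Hat{z}}}_{\Hat{\el}}$; the same estimate holds for $\psi_z^{\widetilde{\el}}$ with $\el_n = \widetilde{\el}$. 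Since every $v_j$ enters at most two of the telescoping contributions, collecting terms and rearranging produces the factor $2$ in front of $\sum_{j=1}^n |\Hat{\el}|^{1/2}|\el_j|^{-1/2}\norm{\BestApp{\omega_j}u - u}_{L^2(\omega_j)}$, as claimed.

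The step I expect to require the most care is verifying the polynomial structure on the intermediate simplex, namely that $v_j|_{\simp_j}\in\mathbb{P}_\ell(\simp_j)$ and that $z$ is a $\mathbb{P}_\ell$-node of $\simp_j$ in the reference sense; both hinge on the geometric compatibility between the covering $\mathcal{W}$, the mesh element $\el_j\in\tri$ whose volume is comparable to that of $\simp_j$, and the location of $z$. The remaining bookkeeping of the weights $|\el_j|^{-1/2}$, in particular the clean attribution of each $\norm{v_j - u}_{L^2(\omega_j)}$ to the element $\el_j$, is a straightforward but attentive accounting once the dual-basis identity and the reference scaling are in hand.
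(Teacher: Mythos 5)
Your proposal is correct and follows essentially the same route as the paper: the telescoping identity is the paper's ``add and subtract $\BestApp{\omega_j}u(z)$'' step, and the subsequent use of the dual-basis identity \eqref{psi_prop} on $\simp_j$, Cauchy--Schwarz, enlargement of the $L^2$-norms to $\omega_j$, and the scaling \eqref{psi_scaling} combined with $|\simp_j|\geq\nu|\el_j|$ matches the paper's argument step for step, including the origin of the factor $2$ from each $\BestApp{\omega_j}u$ appearing in at most two consecutive overlaps.
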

Comparing with Proposition \ref{P:key_estimate}, we notice that the bound 
involves not best errors but $L^2$-errors of best approximations.
\begin{proof}
For every $j=2,\ldots,n$, we add and subtract $\BestApp{\omega_j}u(z)$, which 
is well-defined thanks to $z\in\omega_j$, and use the triangle inequality to get
\[
 \left|
  \BestApp{\omega_1}u(z)-\int_{\widetilde{\el}}u\psi_z^{\widetilde{\el}}
 \right|
 \leq
 \left|
  \BestApp{\omega_n}u(z)-\int_{\widetilde{\el}}u\psi_z^{\widetilde{\el}}
\right|+\sum_{j=1}^{n-1}\left|\BestApp{\omega_j}u(z)-\BestApp{\omega_{j+1}}
u(z)\right|.
\]
We bound the terms on the right-hand side separately.  Exploiting property 
\eqref{psi_prop} and the Cauchy-Schwarz inequality, we obtain
\[
 \left|
  \BestApp{\omega_n}u(z)-\int_{\widetilde{\el}}u\psi_z^{\widetilde{\el}}
 \right|
 =
 \left|
  \int_{\widetilde{\el}}(\BestApp{\omega_n}u-u)\psi_z^{\widetilde{\el}} 
 \right|
 \leq 
 \norm{\BestApp{\omega_n}u-u}_{L^2(\widetilde{\el})}
  \norm{\psi_z^{\widetilde{\el}}}_{ L^2(\widetilde{\el})},
\]
and similarly, for every $j=1,\ldots,n-1$,
\begin{align*}
 &\left|
  \BestApp{\omega_j}u(z)-\BestApp{\omega_{j+1}}u(z)
 \right|
 =
 \left|
  \int_{\simp_j}
   (\BestApp{\omega_j}u-\BestApp{\omega_{j+1}}u)\psi_z^{\simp_j} 
 \right|
\\&\qquad
 \leq
 \norm{\BestApp{\omega_j}u-\BestApp{\omega_{j+1}}u}_{L^2(\simp_j)}
 \norm{\psi_z^{\simp_j}}_{L^2(\simp_j)}
\\&\qquad
 \leq 
 \left(
  \norm{\BestApp{\omega_j}u-u}_{L^2(\simp_j)}
  +\norm{\BestApp{\omega_{j+1}}u-u}_{L^2(\simp_{j})}
 \right)
 \norm{\psi_z^{\simp_j}}_{L^2(\simp_j)}.
\end{align*}
When summing the last inequality over $j$, the $L^2$-norm of 
$\BestApp{\omega_j}u-u$ appears on both $\simp_j$ and $\simp_{j-1}$.  We bound 
both contributions by the $L^2$-norm on $\omega_j$ and combine this with the 
scaling property \eqref{psi_scaling} of $\psi_z$ and $|\simp_j|\geq\nu|\el_j|$. 
Finally, for simplification, we incorporate the term with $\widetilde\el$ into 
the 
sum and obtain the claimed inequality.
\end{proof}
For the covering $\SetPair=\{\pair{\fa}\}_{\fa\in\faces}$, the existence of 
the path in Proposition \ref{P:path} follows from the following property of 
the mesh $\tri$; see \cite{Veeser:13} for a discussion of its implications on 
the regularity of the boundary $\partial\Omega$.
\begin{definition}[Face-connectedness]
\label{tri_prop}
A simplicial mesh $\tri$ is face-connected, if for every element pair $\el$, 
$\widetilde{\el}\in\tri$ sharing a node $z\in\mathcal{N}$, there exists a 
pairwise disjoint finite sequence $\{\el_j\}_{j=1}^n\subset\tri$ such that
\begin{itemize}
 \item $\el_1=\el$ and $\el_n=\widetilde{\el}$,
 \item each intersection $\el_j\cap\el_{j+1}\in\faces$ is an interelement face 
containing $z$.
\end{itemize}
The length $n$ of the path is bounded in terms of
\begin{equation}
\label{Barn}
 \Bar{n}
 :=
 \max_{z\in\mathcal{N}} \#\{\el\in\tri: \el\ni z\}.
\end{equation}
\end{definition}
Essentially, Propositions \ref{interp_prop} and \ref{P:path} cover both 
\eqref{E:H1dec} and \eqref{replacements}; in fact, \cite{Veeser:13} uses a 
variant of Proposition \ref{P:path}, where the intersection of subdomains are 
faces.  The following definition specifies the property of the covering 
$\mathcal{W}$ that is crucial for the robustness in the first part of 
\eqref{replacements}.
\begin{definition}[Internal face covering]
\label{e:ric_E}
A $\beta$-finite covering $\mathcal{W}$ covers interelement faces internally 
if for every interelement face $\fa\in\faces$, there exists 
$\omega\in\mathcal{W}$ such that its interior almost contains $\fa$, i.e.\ 
$\fa\subset\omega$ and $|\fa\cap\accentset{\circ}{\omega}|=|\fa|$.
\end{definition}
While $\SetPair$ covers interelement faces internally, $\tri$ does not.  As a 
consequence, the local best errors associated with $\SetPair$ take into account 
the continuity constraint across interelement faces, a feature that is 
crucial for robustness by the observations in \S\ref{S:counterexample}.

\section{Robust localization to pairs of elements}
\label{S:pair}
%
%
The purpose of this section is to prove the first part of \eqref{replacements}.
The reaction-diffusion norm \eqref{rd-norm} has the $L^2$-norm and the 
$H^1$-seminorm as limiting cases.  We consider these cases first in a unified 
manner by applying Proposition \ref{P:key_estimate} with the same covering; the 
associated interpolation operators differ only in the involved local best 
approximations.  

Throughout this section the mesh $\tri$ is face-connected and, in view of 
\S\ref{S:counterexample}, we choose the covering 
$\SetPair=\{\pair{\fa}\}_{\fa\in\faces}$ and start with the $L^2$-norm, which 
appears the more critical limiting case.

\subsection{Pure reaction norm}
\label{S:L2-norm}
We first introduce an interpolation operator and then show that it is 
locally near best with respect to 
$\tnorm{\cdot}_{\omega}=\norm{\cdot}_{L^2(\omega)}$, $\omega\in\SetPair$.
For simplicity, we write $\mathcal{P}_{\fa}u$ for the best approximation 
to $u$ in $\LFEspace{\pair{\fa}}$ with respect 
to $\norm{\cdot}_{\pair{\fa}}=\norm{\cdot}_{L^2(\pair{\fa})}$.

The definition of the interpolation operator relies on a classification of the 
nodes.  For nodes that are interior to an element, we define the corresponding 
nodal value of $\interp$ with the help of a best approximation.  For the other 
nodes that belong to several elements or are on the boundary $\partial\Omega$, 
we use the averaging technique of L.\ R.\ Scott and S.\ Zhang 
\cite{Scott.Zhang:90}.  More precisely: for every element 
$\el\in\tri$, we fix a face $\fa=\fa_\el\in\faces$ such that 
$\fa_\el\subset\partial\el$ and, moreover, for every 
$z\in\mathcal{N}\cap\Sigma$ with $\Sigma:=\cup_{\el´\in\tri}\partial\el$, we 
fix an element $\elAv\in\tri$ such that $z\in\partial \elAv$.  Given $u\in 
H^1(\Omega)$, we then set
\begin{subequations}
\label{D:interp}
\begin{equation}
 \interp^0 u
 :=
 \sum_{z\in\mathcal{N}}u_z\phi_z 
\end{equation}
where
\begin{equation}
\label{D:nodalval}
 u_z
 =
 \left\{
 \begin{array}{ll}
  \mathcal{P}_{\fa_\el}u(z)
   &\text{if } z\in\mathcal{N}_{\accentset{\circ}{\el}}
    \text{ for some }\el\in\tri,
 \\[2mm]
 \displaystyle \int_{\elAv} u\psi_z^{\elAv}
  &\text{if } z\in\mathcal{N}\cap\Sigma.
 \end{array}
 \right.
\end{equation}
\end{subequations}
Notice that in general $\interp^0 u|_\el$ depends not only on $u|_\el$ but also 
on $u|_{\widetilde\el}$ for neighbouring elements $\widetilde\el$.

In order to verify that $\interp^0$ is locally near best, we fix an element 
$\el\in\tri$, write $\fa:=\fa_\el$ for short and choose $\omega=\pair{\fa}$ 
in \eqref{hp1}. This is an admissible choice for $\omega$ since 
$\pair{\fa}\supseteq\el$. It is also a natural choice because in this way, for 
every $z\in \mathcal{N}_{\accentset{\circ}{\el}}$, we have
\begin{equation}
\label{interior_nodes}
|\mathcal{P}_{\fa}u(z)-\interp^0 u(z)|=0.
\end{equation}
Otherwise, if $z\in\mathcal{N}_{\el}\cap\Sigma$, we exploit Proposition 
\ref{P:path}.  Since $\tri$ is face-connected, there exists a 
finite sequence of faces $\{\fa_j\}_{j=1}^n$ such that the corresponding 
sequence $\{\omega_j\}_{j=1}^{n}:=\{\pair{\fa_j}\}_{j=1}^n\subset\SetPair$ 
satisfies
\begin{itemize}
\item $\omega_1=\pair{\fa}\supseteq\el$ and $\omega_n\supseteq \elAv$,
\item each intersection $\el_j:=\omega_j\cap\omega_{j+1}$ is an element of 
$\tri$ containing $z$.
\end{itemize}
We can therefore apply Proposition \ref{P:path} with $\nu=1$ and get
\begin{equation}
\label{boundary_nodes}
 \left| \mathcal{P}_{\fa}u(z)-\interp^0 u(z) \right|
 \leq
 2\|\Hat{\psi}_{\Hat{z}}\|_{\Hat{\el}}\sum_{j=1}^{n}\frac{|\Hat{\el}|^{1/2}}{
|\el_j|^{1/2}}\norm{\mathcal{P}_{\fa_j}u-u}_{\omega_j}
\end{equation}
with $\el_n:=\el_z$.  Combining \eqref{interior_nodes} and 
\eqref{boundary_nodes} with the scaling property \eqref{phi_scaling}, we obtain
\begin{align*}
 \sum_{z\in\mathcal{N}_\el}\left|
  \mathcal{P}_{\fa}u(z)-\interp^0 u(z)
 \right| &\norm{\phi_z}_{\el}
 \leq
 2\sum_{z\in\mathcal{N}_{\partial\el}}
  \|\Hat{\psi}_{\Hat{z}}\|_{\Hat{\el}}
  \|\Hat{\phi}_{\Hat{z}}\|_{\Hat{\el}}
  \sum_{j=1}^{n} 
   \frac{|\el|^{1/2}}{|\el_j|^{1/2}}\norm{\mathcal{P}_{\fa_j}u-u}_{\omega_j}
\\
 &\leq \constant_0
 \max_{\widetilde{\el}\in\Patch{\el}}
 \frac{|\el|^{1/2}}{|\widetilde{\el}|^{1/2}}
 \sum_{\widetilde{\fa}\in\Skeleton{\el}}
   \norm{\mathcal{P}_{\widetilde{\fa}}u-u}_{\pair{\widetilde{\fa}}},
\end{align*}
where
\begin{equation}
\label{E:K1}
\textstyle
 \constant_0
 =
 \constant_0(\ell,d)
 :=
 2\sum_{\Hat{z}\in\partial\Hat{\el}}
  \|\Hat{\psi}_{\Hat{z}}\|_{\Hat{\el}}
  \|\Hat{\phi}_{\Hat{z}}\|_{\Hat{\el}}
\end{equation}
and $\Patch{\el}$ is the union of the elements $\widetilde{\el}\in\tri$
touching $\el$ and $\Skeleton{\el} := \cup\{\fa\in\faces: 
\fa\cap\partial\el\neq\emptyset\}$ is the skeleton of $\Patch{\el}$.
In order to achieve a bound that is independent of $\el$, we introduce
\begin{equation}
\label{e:mu}
 \mu_\tri
 :=
 \max_{\phantom{\widetilde{I}}\el\in\tri\phantom{\widetilde{I}}}
 \max_{\widetilde{\el}\in\Patch{\el}}
  \frac{|\el|^{1/2}}{|\widetilde{\el}|^{1/2}}
\end{equation}
and arrive at
\begin{align}
\label{key-est-L2}&\qquad
 \sum_{z\in \mathcal{N}_\el}
  |\mathcal{P}_{\fa}u(z)-\interp^0 u(z)|\norm{\phi_z}_{\el}
 \leq
 \mu_\tri \constant_0
 \sum_{\widetilde{\fa}\in\Skeleton{\el}}
  \norm{\mathcal{P}_{\widetilde{\fa}}u-u}_{\pair{\widetilde{\fa}}}.
\end{align}
We observe that $\#\Skeleton{\el}$ is bounded in terms of $d$ and $\Bar{n}$ 
from \eqref{Barn}.  Given $\widetilde\fa\in\faces$, the same holds for 
$\#\{\el\in\tri:\Skeleton{\el}\ni\widetilde\fa\}$.  We can therefore apply 
Proposition \ref{P:key_estimate} and get the following theorem.
\begin{thm}[Localization of best $L^2$-error]
\label{T:LocL2}
For every $u\in L^2(\Omega)$ it holds
\begin{equation*}
 \inf_{v\in \FEspace}\norm{u-v}_\Omega
 \leq 
 C\left(
  \sum_{\fa\in\faces}
  \inf_{P\in\LFEspace{\pair{\fa}}} \norm{u-P}^2_{\pair{\fa}}
 \right)^{\frac12}
 \leq
 C(d+1) \inf_{v\in \FEspace}\norm{u-v}_\Omega
\end{equation*}
where the constant $C$ depends on the polynomial degree $\ell$, the dimension 
$d$, $\Bar{n}$ from \eqref{Barn}, and $\mu_\tri$ from \eqref{E:K1}. 
\end{thm}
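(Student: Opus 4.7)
The plan is to apply Proposition \ref{P:key_estimate} with the $(d+1)$-finite covering $\SetPair=\{\pair{\fa}\}_{\fa\in\faces}$, the local norm $\tnorm{\cdot}_\omega=\norm{\cdot}_{L^2(\omega)}$ (clearly set-additive), the operators $\BestApp{\pair{\fa}}=\mathcal{P}_\fa$, and the interpolation operator $\interp^0$ defined in \eqref{D:interp}. Essentially all the analytic work has already been done; what remains is to repackage the estimate \eqref{key-est-L2} as the local near best hypothesis of Definition \ref{interp_prop} and to check two combinatorial counting conditions.

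First I would verify that $\interp^0$ is locally near best. For every $\el\in\tri$, take $\omega=\pair{\fa_\el}\supseteq\el$ and set
\[
 A_\el := \{\pair{\widetilde\fa} : \widetilde\fa\in\Skeleton{\el}\}\subset\SetPair.
\]
Then $\omega\in A_\el$, and \eqref{key-est-L2} is precisely the inequality \eqref{hp1} of Definition \ref{interp_prop} with local constant $\Cloc=\mu_\tri\constant_0$.

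Next I would establish the required bounds $\alpha_1$ and $\alpha_2$. The cardinality $\#A_\el=\#\Skeleton{\el}$ is controlled by the number of faces meeting $\overline{\el}$, which is bounded in terms of $d$ and $\Bar n$ (as noted just before the theorem). Conversely, a fixed $\widetilde\fa\in\faces$ lies in $\Skeleton{\el'}$ only when $\widetilde\fa$ shares a vertex with $\el'$, so the number of such $\el'$ is again controlled by $d$ and $\Bar n$. This gives both $\alpha_1,\alpha_2\leq\alpha_1(d,\Bar n)$.

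Proposition \ref{P:key_estimate} then directly yields the two-sided estimate with $C=\sqrt{\alpha_1\alpha_2}(1+\mu_\tri\constant_0)$ and the multiplicative factor $\beta=d+1$ on the right-hand side, matching the claimed dependence of the constant on $\ell$, $d$, $\Bar n$, and $\mu_\tri$. The proof involves no genuine obstacle: the substance lies in the definition \eqref{D:interp} that combines $L^2$-best approximation on pairs at interior nodes with Scott--Zhang averaging at skeleton nodes, and in the path argument through Proposition \ref{P:path} that produced \eqref{key-est-L2}.
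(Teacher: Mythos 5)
Your proposal is correct and follows essentially the same route as the paper: the authors likewise obtain the theorem by reading \eqref{key-est-L2} as the local near best property \eqref{hp1} for $\interp^0$ with $A_\el$ given by the pairs over $\Skeleton{\el}$, noting the two cardinality bounds in terms of $d$ and $\Bar{n}$, and then invoking Proposition \ref{P:key_estimate} with $\beta=d+1$. The only substance beyond this bookkeeping is the derivation of \eqref{key-est-L2} itself, which you correctly attribute to the definition \eqref{D:interp} and the path argument of Proposition \ref{P:path}.
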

Notice that there is no explicit dependence on the shape regularity of $\tri$ 
but a dependence on the local quasi-uniformity of $\tri$ through $\Bar{n}$ and 
$\mu_\tri$. 
\subsection{Pure diffusion seminorm}
\label{S:H1-seminorm}
%
The counterpart of Theorem \ref{T:LocL2} for the $H^1$-seminorm follows from 
\eqref{E:H1dec}.  In this subsection we present an alternative approach relying 
on an interpolation operator that is very close to the one in 
\S\ref{S:L2-norm}.  The obtained inequalities will turn out useful for dealing 
with the reaction-diffusion norm.

Since $\norm{\nabla\cdot}_{\omega}=\norm{\nabla\cdot}_{L^2(\omega)}$ is only a 
seminorm, best approximations in $\FEspace|_\omega$ are only unique up to a 
constant.  This freedom allows to bound the $L^2$-errors appearing in 
Proposition \ref{P:path} by local best errors in the $H^1$-seminorm with the 
help of the Poincar\'e inequality.  We are thus led to the following local 
best approximation operators: given $u\in H^1(\Omega)$ and any $\fa\in\faces$, 
let $\mathcal{R}_\fa$ be the best approximation to $u$ in $S|_{\pair{\fa}}$ 
with respect to $\norm{\nabla\cdot}_{\pair{\fa}}$ such that
\begin{equation}
\label{mean_value}
 \int_{\pair{\fa}}\mathcal{R}_{\fa}u
 =
 \int_{\pair{\fa}}u.
\end{equation}
The interpolation operator $\interp^\infty$ is then given by \eqref{D:interp} 
where  $\mathcal{P}_{\fa_\el}$ is replaced by $\mathcal{R}_{\fa_\el}$.  
Consequently, $\interp^0$ and $\interp^\infty$ differ only in the involved 
local best approximations.  Before embarking on the proof that $\interp^\infty$ 
is locally near best, we provide the following tailor-made Poincar\'e 
inequality.

\begin{lem}[Poincar\'e inequality for element pairs]
\label{R:poin}
Let $\omega$ be the union of two adjacent elements $\el_1$, $\el_2$ sharing a 
face $\fa=\el_1\cap\el_2$. For every $v\in H^1(\omega)$ it holds
\[
 \norm{v-\frac{1}{|\omega|}\int_\omega v }_{\omega}
 \leq
 C_P h_\omega\norm{\nabla v}_\omega,
\]
where $C_P\leq\left(\frac{1}{\pi^2}+\frac{1}{d^2}\right)^{1/2}$ and 
$h_\omega:=\max\{\diam(\el_1),\diam(\el_2)\}$.
\end{lem}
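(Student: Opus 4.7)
The plan is to decompose $\omega$ into its two constituent simplices and control the resulting pieces separately via an orthogonality identity. Write $\bar v_\omega = |\omega|^{-1}\int_\omega v$ and $\bar v_i = |\el_i|^{-1}\int_{\el_i} v$ for $i=1,2$, and expand
\[
 \norm{v-\bar v_\omega}_\omega^2
 = \sum_{i=1}^2 \norm{v-\bar v_i}_{\el_i}^2
 + \sum_{i=1}^2 |\el_i|\,(\bar v_i - \bar v_\omega)^2 ,
\]
the cross terms vanishing because $\int_{\el_i}(v-\bar v_i) = 0$. Using $\bar v_\omega = (|\el_1|\bar v_1 + |\el_2|\bar v_2)/|\omega|$, the second sum simplifies to $\tfrac{|\el_1||\el_2|}{|\omega|}(\bar v_1-\bar v_2)^2$.

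For the first (elementwise) sum I would invoke the Payne--Weinberger / Bebendorf Poincar\'e inequality on convex sets: since each simplex is convex with diameter at most $h_\omega$, one has $\norm{v-\bar v_i}_{\el_i} \le (h_\omega/\pi)\norm{\nabla v}_{\el_i}$, and summing gives the $1/\pi^2$ contribution. This is the routine part.

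The interesting step is the bound on the cross term $\tfrac{|\el_1||\el_2|}{|\omega|}(\bar v_1-\bar v_2)^2$, where the factor $1/d^2$ must appear. I would construct a piecewise smooth vector field $w$ on $\omega$ as follows: let $v_0^i$ denote the vertex of $\el_i$ opposite the shared face $\fa$, and set
\[
 w(x) := \frac{(-1)^{i+1}}{d\,|\el_i|}\,(x-v_0^i) \qquad \text{for } x\in\el_i.
\]
Then $\mathrm{div}\,w = (-1)^{i+1}/|\el_i|$ on $\el_i$. Every face of $\el_i$ other than $\fa$ contains $v_0^i$, so $(x-v_0^i)$ is tangential there and $w\cdot n=0$ on $\partial\omega$; on $\fa$ the identity $(x-v_0^i)\cdot n_\fa = \pm h_\perp^i$ together with $h_\perp^i|\fa|=d|\el_i|$ ensures that the two one-sided normal traces agree, so $w\in H(\mathrm{div},\omega)$. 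Integration by parts then gives
\[
 \bar v_1 - \bar v_2 = \int_\omega v\,\mathrm{div}\, w\,dx = -\int_\omega \nabla v \cdot w\,dx ,
\]
and Cauchy--Schwarz, together with $|x-v_0^i|\le h_\omega$, yields $\norm{w}_\omega^2 \le \tfrac{h_\omega^2}{d^2}\bigl(\tfrac{1}{|\el_1|}+\tfrac{1}{|\el_2|}\bigr) = \tfrac{h_\omega^2}{d^2}\cdot\tfrac{|\omega|}{|\el_1||\el_2|}$. Therefore
\[
 \frac{|\el_1||\el_2|}{|\omega|}(\bar v_1-\bar v_2)^2 \le \frac{h_\omega^2}{d^2}\norm{\nabla v}_\omega^2 ,
\]
and combining with the Payne--Weinberger part closes the proof with $C_P^2 \le 1/\pi^2 + 1/d^2$.

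The only delicate point is the design of the vector field $w$: the explicit radial form is forced by the need to have vanishing normal trace on the five (or $2d$) outer faces while producing a piecewise constant divergence that exactly reproduces $\bar v_1-\bar v_2$. The identity $(x-v_0^i)\cdot n_\fa = h_\perp^i$ on $\fa$ and $h_\perp^i|\fa| = d|\el_i|$ is what cancels the $d$'s and leaves the sharp $1/d$ factor; this is where one must be careful with signs and orientations.
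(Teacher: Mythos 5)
Your proof is correct and reaches the same constant $C_P^2\leq\frac{1}{\pi^2}+\frac{1}{d^2}$, but by a different route. The paper first replaces the mean over $\omega$ by the mean over the shared face $\fa$ (admissible since the global mean is the $L^2$-best constant), then invokes the trace identity of \cite[Proposition 4.2]{Veeser.Verfuerth:09}, $\frac{1}{|\fa|}\int_\fa v=\frac{1}{|\el_i|}\int_{\el_i}v+\frac{1}{d|\el_i|}\int_{\el_i}(x-z_i)\cdot\nabla v$, and exploits the elementwise orthogonality of $v-\bar v_i$ to constants before applying Payne--Weinberger and Cauchy--Schwarz. You instead avoid the face mean entirely: your Pythagorean split $\norm{v-\bar v_\omega}_\omega^2=\sum_i\norm{v-\bar v_i}_{\el_i}^2+\frac{|\el_1||\el_2|}{|\omega|}(\bar v_1-\bar v_2)^2$ is exact (no inequality is given away at this stage), and you represent $\bar v_1-\bar v_2$ by integrating against an explicit piecewise-linear field in $H(\mathrm{div},\omega)$ with vanishing normal trace on $\partial\omega$. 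The two arguments are close relatives --- your field $\frac{(-1)^{i+1}}{d|\el_i|}(x-v_0^i)$ is, up to normalization, the same radial field $\mathbf{q}_{i,\fa}$ that underlies the cited trace identity, and the checks you perform (tangentiality on the outer faces, matching normal traces on $\fa$ via $h_\perp^i|\fa|=d|\el_i|$) are in effect a self-contained proof of that identity. What your version buys is independence from the external reference and an identity rather than an inequality in the first step; what the paper's version buys is brevity, since the trace identity is reused elsewhere (e.g.\ in \eqref{e:trace+poin} and in Proposition \ref{P:only_el}). All your computations check out, including the weighted-mean algebra and the bound $\norm{w}_\omega^2\leq\frac{h_\omega^2}{d^2}\frac{|\omega|}{|\el_1||\el_2|}$.
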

\begin{proof}
Since the mean value on $\omega$ is the best approximating constant with 
respect to the $L^2$-norm, we can substitute it with the mean value on the 
common face $\fa$:
\[
 \norm{v-\frac{1}{|\omega|}\int_\omega v}_\omega^2
 \leq
 \norm{v-\frac{1}{|\fa|}\int_\fa v}_\omega^2
 =
 \sum_{i=1}^2\norm{v-\frac{1}{|\fa|}\int_\fa v}_{\el_i}^2.
\]
Thanks to the trace identity from \cite[Proposition 4.2]{Veeser.Verfuerth:09}, 
we may write
\begin{equation}
\label{e:trace}
 \frac{1}{|\fa|}\int_\fa v 
 =
 \frac{1}{|\el_i|}\int_{\el_i}v
 +
 \frac{1}{d|\el_i|}\int_{ \el_i}\mathbf{q}_{i,\fa}\cdot\nabla v,
\end{equation}
where $\mathbf{q}_{i,\fa}(x):=x-z_i$ and $z_i$ is the vertex of $\el_i$ 
opposite to $\fa$.  Moreover the classical Poincar\'e inequality on convex 
domains, see \cite{Bebendorf:03,Payne.Weinberger:60}, implies
\begin{align*}
 &\norm{
 v - \frac{1}{|\el_i|}\int_{\el_i}v
  - \frac{1}{d|\el_i|}\int_{\el_i}\mathbf{q}_{i,\fa}\cdot\nabla v
 }_{\el_i}^2
\\&\qquad
 =
 \norm{v-\frac{1}{|\el_i|}\int_{\el_i}v}_{\el_i}^2
 +
 \frac{1}{d^2|\el_i|} \left(
  \int_{\el_i}\mathbf{q}_{i,\fa}\cdot\nabla v
 \right)^2
\\ &\qquad
 \leq 
 \diam(\el_i)^2 \left(
  \frac{1}{\pi^2}+\frac{1}{d^2}
 \right)\norm{\nabla v}_{\el_i}^2.
\end{align*}
Combining \eqref{e:trace} and the two inequalities yields the claim.
\end{proof}
In order to show that $\interp^\infty$ is locally near best with respect to 
$\norm{\nabla\cdot}_{\pair{\fa}}$, $\fa\in\faces$, we fix an element 
$\el\in\tri$, write $\fa:=\fa_\el$ for short and, as in \S\ref{S:L2-norm}, we 
choose $\omega=\pair{\fa}$ in \eqref{hp1}. If $z\in 
\mathcal{N}_{\accentset{\circ}{\el}}$, we have again
\begin{equation}
\label{interior_nodesH1}
 |\mathcal{R}_{\fa}u(z)-\interp^\infty u(z)|
 =
 0.
\end{equation}
Otherwise, if $z\in\mathcal{N}_{\el}\cap\Sigma$, we apply also Proposition 
\ref{P:path} with $\nu=1$ and obtain
\[
 |\mathcal{R}_{\fa}u(z)-\interp^\infty u(z)|
 \leq
 2\|\Hat{\psi}_{\Hat{z}}\|_{\Hat{\el}}
 \sum_{j=1}^{n}\frac{|\Hat{\el}|^{1/2}}{|\el_j|^{1/2}}
  \norm{\mathcal{R}_{\fa_j}u-u}_{\omega_j}
\]
with $\el_n=\el_z$.  Here we invoke the Poincar\'e inequality Lemma 
\ref{R:poin}, which yields
\begin{align}
\label{boundary_nodesH1}
|\mathcal{R}_{\fa}u(z)-\interp^\infty u(z)|
 \leq 
 2 C_{P} \|\Hat{\psi}_{\Hat{z}}\|_{\Hat{\el}}
 \sum_{j=1}^{n}
  \hE{\fa_j}\frac{|\Hat{\el}|^{1/2}}{|\el_j|^{1/2}}
  \norm{\nabla(\mathcal{R}_{\fa_j}u-u)}_{\omega_j}.
\end{align}
with $\hE{\fa}:=\max_{\el\in\tri,\el\subseteq\pair{\fa}} \diam(\el)$.  
Combining \eqref{interior_nodesH1} and \eqref{boundary_nodesH1} with 
\eqref{phi_scaling_grad} leads to
\begin{align*}
 &\sum_{z\in \mathcal{N}_{\el}}
 |\mathcal{R}_{\fa}u(z)-\interp^\infty u(z)|\norm{\nabla\phi_z}_{\el}
\\ &\qquad
 \leq 
 2\Hat{h} \sum_{z\in\mathcal{N}_{\partial\el}}
 \|\Hat{\psi}_{\Hat{z}}\|_{\Hat{\el}}
 \|\nabla\Hat{\phi}_{\Hat{z}}\|_{\Hat{\el}}
 C_{P} \sum_{j=1}^n 
  \frac{\hE{\fa_j}|\el|^{1/2}}{\rho_\el|\el_j|^{1/2}}
  \norm{\nabla(\mathcal{R}_{\fa_j}u-u)}_{\pair{\fa_j}}
\\ &\qquad
 \leq 
 \constant_\infty
 \max_{\widetilde{\el}\in\Patch{\el}}
  \frac{|\el|^{1/2}}{|\widetilde{\el}|^{1/2}}
 \max_{\widetilde{\el}\in\Patch{\el}}
  \frac{h_{\widetilde{\el}}}{\rho_\el}
 \sum_{\widetilde{\fa}\in\Skeleton{\el}}
 \norm{\nabla(\mathcal{R}_{\widetilde{\fa}}u-u)}_{\pair{\widetilde{\fa}}},
\end{align*}
where $h_{\widetilde{\el}}:=\diam(\widetilde{\el})$ and
\[
 \textstyle
 \constant_\infty=\constant_\infty(\ell,d)
 :=
 2 \sqrt2 C_P
 \sum_{\Hat{z}\in\partial\Hat{\el}}
  \|\Hat{\psi}_{\Hat{z}}\|_{\Hat{\el}} 
  \|\nabla\Hat{\phi}_{\Hat{z}}\|_{\Hat{\el}}.
\]
We thus see that the use of the Poincar\'e inequality compensates the 
scaling of $\norm{\nabla \phi_z}_{\el}$, if the shape parameter
\begin{equation}
\label{e:sigma}
 \ShapePar_\tri
 :=
 \max_{\phantom{\widetilde{I}}\el\in\tri\phantom{\widetilde{I}}}
 \max_{\widetilde{\el}\in\Patch{\el}}
  \frac{h_{\widetilde{\el}}}{\rho_\el}.
\end{equation}
of $\tri$ is moderate.  We therefore have
\begin{equation}
\label{key-est-H1}
\begin{aligned}
 &\sum_{z\in \mathcal{N}_{\el}}
  |\mathcal{R}_{\fa}u(z)-\interp^\infty u(z)|\norm{\nabla\phi_z}_{\el}
\\ &\qquad
 \leq
 \ShapePar_\tri\mu_\tri \constant_\infty
 \sum_{\widetilde{\fa}\in\Skeleton{\el}}
  \norm{\nabla(\mathcal{R}_{\widetilde{\fa}}u-u)}_{\pair{\widetilde{\fa}}}
\end{aligned}
\end{equation}
and can apply Proposition \ref{P:key_estimate}.  Apart from the dependencies 
listed in Theorem \ref{T:LocL2}, the involved constant depends in addition on 
the shape coefficient of $\tri$ in \eqref{e:sigma}.

\subsection{Reaction-diffusion Norm}
\label{S:eps-norm}
%
%
We now turn to the main result of this section: the robust localization of the 
best error in the reaction-diffusion norm.  To this end, we follow the lines of 
\S\ref{S:L2-norm} and \S\ref{S:H1-seminorm}, combining their results.  For 
simplicity, we write $\tnorm{\cdot}_\omega$ for 
$(\norm{\cdot}^2_{\omega}+\veps\norm{\nabla\cdot}_{\omega}^2)^{1/2}$.

Here we use the following local best approximation operators: given $u\in 
H^1(\Omega)$ and $\fa\in\faces$, let $\RitzP{\fa}u$ be the best approximation 
in $\LFEspace{\pair{\fa}}$ with respect to the norm 
$\tnorm{\cdot}_{\pair{\fa}}$.  Then, for every $v\in \LFEspace{\pair{\fa}}$, 
there holds
\[
 \veps\int_{\pair{\fa}}\nabla u \cdot \nabla v
 +
 \int_{\pair{\fa}}uv
 =
 \veps\int_{\pair{\fa}}\nabla \RitzP{\fa} u \cdot \nabla v
 +
 \int_{\pair{\fa}}\RitzP{\fa} u.
\]
Testing with $v=1$ yields
\begin{equation}
\label{mean_value_RD}
 \int_{\pair{\fa}}\RitzP{\fa}u
 =
 \int_{\pair{\fa}}u,
\end{equation}
which shows that \eqref{mean_value} is a natural choice.  The interpolation 
operator $\interp^\veps$ is then given by \eqref{D:interp} where 
$\mathcal{P}_{\fa_\el}$ is replaced by $\RitzP{\fa_\el}$.  Thus, 
$\interp^\veps$ differs from $\interp^0$ and $\interp^\infty$ only in the 
choice 
of the local best approximations.  This and \eqref{mean_value_RD} entail that 
the counterparts of \eqref{key-est-L2} and \eqref{key-est-H1} for 
$\interp^\veps$ and $\RitzP{\fa_\el}$ hold.

In order to show that $\interp^\veps$ is locally near best in a robust manner,
we again fix $\el\in\tri$, write $\fa:=\fa_\el$ for short and choose 
$\omega=\pair{\fa}$ in \eqref{hp1}.  Due to the equivalence of norms on a 
finite dimensional space, there exist constants $\cnorm=\cnorm(\ell,d)$ and 
$\Cnorm=\Cnorm(\ell,d)$ such that, for every reference node $\Hat{z}$,
\[
 \cnorm\|\Hat{\phi}_{\Hat{z}}\|_{\Hat{\el}}
 \leq
 \|\nabla\Hat{\phi}_{\Hat{z}}\|_{\Hat{\el}}
 \leq
 \Cnorm\|\Hat{\phi}_{\Hat{z}}\|_{\Hat{\el}}.
\]
Using \eqref{phi_scaling} and \eqref{phi_scaling_grad}, we derive from this
\begin{equation}
\label{byL2}
 \tnorm{\phi_z}_\el
 \leq
 \left(
  1 + \veps \frac{\Cnorm^2\Hat{h}^2}{\rho_\el^2}
 \right)^{1/2}
 \norm{\phi_z}_\el
\end{equation}
on one hand and
\begin{equation}
\label{byH1}
 \tnorm{\phi_z}_\el
 \leq
 \left(
  1 + \frac1{\veps} \frac{h_\el^2}{\cnorm^2 \Hat{\rho}^2}
 \right)^{1/2}
 \veps^{1/2} \norm{\nabla \phi_z}_\el
\end{equation}
on the other.  Inequality \eqref{byL2} and the counterpart of 
\eqref{key-est-L2} imply
\begin{align*}
 \sum_{z\in \mathcal{N}_{\el}}
 |\RitzP{\fa}u(z)-\interp^\veps u(z)| \, \tnorm{\phi_z}_{\el}
 \leq \mu_\tri \constant_0
 \sqrt{1+\veps\frac{\Cnorm^2\Hat{h}^2}{\rho_\el^2}}
 \sum_{\widetilde{\fa}\in\Skeleton{\el}}\!\!
  \norm{\RitzP{\widetilde{\fa}}u-u}_{\pair{\widetilde{\fa}}},
\end{align*}
while \eqref{byH1} and the counterpart of \eqref{key-est-H1} give
\begin{multline*}
 \sum_{z\in \mathcal{N}_{\el}}
  |\RitzP{\fa}u(z)-\interp^\veps u(z)| \, \tnorm{\phi_z}
\\
 \leq 
 \mu_\tri \ShapePar_\tri 
 \constant_\infty 
 \sqrt{1+\frac1{\veps} \frac{h_\el^2}{\cnorm^2 \Hat{\rho}^2}} 
 \sum_{\widetilde{\fa}\in\Skeleton{\el}} 
  \veps^{1/2}
  \norm{\nabla(\RitzP{\widetilde{\fa}}u-u)}_{\pair{\widetilde{\fa}}}.
\end{multline*}
Combining the last two inequalities, we arrive at
\begin{equation}
\label{fin_eps}
 \sum_{z\in \mathcal{N}_{\el}}
  |\RitzP{\fa}u(z)-\interp^\veps u(z)| \, \tnorm{\phi_z}
 \leq 
 \mu_\tri \constant_\veps 
 \sum_{\widetilde{\fa}\in\Skeleton{\el}}
 \tnorm{\RitzP{\widetilde{\fa}}u-u}_{\pair{\widetilde{\fa}}}
\end{equation} 
where
\[
 \constant_\veps
 :=
 \min\left\{
  \constant_0
  \sqrt{1+\veps\frac{\Cnorm^2\Hat{h}^2}{\rho_\el^2}},
  \constant_\infty
  \ShapePar_\tri 
  \sqrt{1+\frac1{\veps} \frac{h_\el^2}{\cnorm^2 \Hat{\rho}^2}} 
 \right\}
\]
satisfies $\lim_{\veps\searrow0} \constant_\veps = \constant_0$ and  
$\lim_{\veps\nearrow\infty} \constant_\veps = \constant_\infty$ as well as
\[
 \constant_\veps
 \leq 
 \max\left\{
  \constant_0,
  \constant_\infty \ShapePar_\tri
 \right\}
 \sqrt{1 + \frac{\Cnorm}{\cnorm}
   \frac{\Hat{h}}{\Hat{\rho}} \frac{h_\el}{\rho_\el} }.
\]
Using this in Proposition \ref{P:key_estimate} provides the main result of this 
section:
\begin{thm}[Robust localization for reaction-diffusion norm]
\label{T:pair_dec}
There is a constant $C$ such that for any $\veps>0$ and every $u\in 
H^1(\Omega)$ it holds
\begin{multline*}
 \inf_{v\in \FEspace}\tnorm{u-v}_\Omega
 \leq 
 C\left(
  \sum_{\fa\in\faces}\inf_{P\in\LFEspace{\pair{\fa}}}
 \tnorm{u-P}^2_{\pair{\fa}}
 \right)^{\frac12}
 \leq
 C(d+1) \inf_{v\in \FEspace}\tnorm{u-v}_\Omega
\end{multline*}
where $\tnorm{\cdot}$ stands for $\big(\norm{\cdot}^2 + 
\veps\norm{\nabla\cdot}^2\big)^{1/2}$.   The constant $C$ is bounded in terms 
of the polynomial degree $\ell$, the dimension $d$, $\Bar{n}$ from 
\eqref{Barn}, $\mu_\tri$ from \eqref{E:K1}, and the shape parameter 
$\ShapePar_\tri$ from \eqref{e:sigma}.  If $\veps$ is small with respect to 
$\min_{\el\in\tri} \rho_\el^2$, the dependence on the shape parameter 
$\ShapePar_\tri$ disappears.
\end{thm}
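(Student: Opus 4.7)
The plan is to apply Proposition \ref{P:key_estimate} with the covering $\SetPair=\{\pair{\fa}\}_{\fa\in\faces}$ and the reaction-diffusion norm $\tnorm{\cdot}_\omega$, following exactly the template used for the two limiting cases in \S\ref{S:L2-norm} and \S\ref{S:H1-seminorm}. The operator to analyze is $\interp^\veps$, defined through \eqref{D:interp} with $\mathcal{P}_{\fa_\el}$ replaced by the local reaction-diffusion best approximation $\RitzP{\fa_\el}$; the first key observation is that testing the variational identity characterizing $\RitzP{\fa}u$ with $v\equiv1$ immediately yields the mean-value preservation \eqref{mean_value_RD}, matching the property used in \S\ref{S:H1-seminorm}.

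With this choice, I would reproduce, essentially verbatim, the nodal estimates obtained in the two limiting regimes: for interior element nodes the nodal value of $\interp^\veps$ coincides with $\RitzP{\fa}u(z)$ by construction, while for boundary nodes Proposition \ref{P:path} applied along a face-connected path in $\SetPair$ (with $\nu=1$) produces a chain of $L^2$-errors of local best approximations. This yields the $L^2$-side bound as in \eqref{key-est-L2} with $\RitzP{\fa}$ in place of $\mathcal{P}_\fa$; using \eqref{mean_value_RD} together with the Poincaré inequality of Lemma \ref{R:poin} to convert $L^2$-errors into gradient errors yields the $H^1$-side bound as in \eqref{key-est-H1}.

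The crux is combining these into a bound for $\tnorm{\phi_z}_\el$ that is \emph{robust} in $\veps$. To this end I would use the equivalence of the $L^2$- and $H^1$-seminorms on the finite-dimensional reference space $\mathbb{P}_\ell(\Hat\el)$, together with the scalings \eqref{phi_scaling} and \eqref{phi_scaling_grad}, to get the two bounds \eqref{byL2} and \eqref{byH1}: one controls $\tnorm{\phi_z}_\el$ by $\norm{\phi_z}_\el$ with a constant degenerating as $\veps\to\infty$, the other by $\veps^{1/2}\norm{\nabla\phi_z}_\el$ with a constant degenerating as $\veps\to0$. Taking the minimum of the resulting two estimates produces \eqref{fin_eps} with a constant $\constant_\veps$ that is uniformly bounded on $(0,\infty)$ and even converges to $\constant_0$ as $\veps\searrow0$, which is exactly the source of the claim that the shape-parameter dependence drops out for small $\veps$.

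The main obstacle is precisely this robust control of $\constant_\veps$: a naive choice of local best approximator, for instance one not preserving the mean, would block the application of Poincaré and therefore fail to deliver \eqref{byH1} with a constant independent of the mesh size. The choice of $\RitzP{\fa}$ (with mean preservation built in through variational orthogonality against constants) is what reconciles the $L^2$- and $H^1$-routes in a single interpolant. Once \eqref{fin_eps} is in hand, the remaining work is cosmetic: count neighbours to see that $\#A_\el$ and $\#\{\el':\omega\in A_{\el'}\}$ are bounded in terms of $\Bar{n}$ from \eqref{Barn}, and feed the outcome into Proposition \ref{P:key_estimate} with the covering $\SetPair$, which is $(d+1)$-finite. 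This yields both inequalities of the theorem, with the stated dependencies of $C$.
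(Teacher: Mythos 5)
Your proposal is correct and follows essentially the same route as the paper: the operator $\interp^\veps$ built from $\RitzP{\fa_\el}$, the mean-value property \eqref{mean_value_RD} enabling the Poincar\'e inequality of Lemma \ref{R:poin}, the two reference-element bounds \eqref{byL2} and \eqref{byH1} combined via a minimum into \eqref{fin_eps}, and finally Proposition \ref{P:key_estimate} with the $(d+1)$-finite covering $\SetPair$. The stated dependencies of $C$ and the disappearance of $\ShapePar_\tri$ for small $\veps$ are justified exactly as in the paper, via the behaviour of $\constant_\veps$ as $\veps\searrow0$.
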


\section{Local best errors: single elements versus pairs}
\label{alternative}
%
%
According to \S\ref{S:counterexample} and \S\ref{S:eps-norm}, the best errors 
in the reaction-dffusion norm \eqref{rd-norm} on single elements do not provide 
a robust localization, while those on pairs of elements do.  This section 
analyzes their difference. As side-products, we derive the second part of 
\eqref{replacements} and an alternative way to compute best errors on element 
pairs.

Throughout this section $\omega$ is the union of two simplices $\el_1$ and 
$\el_2$ with common face $\fa=\el_1\cap\el_2$.  For example, 
$\omega=\pair{\fa}$ with $\fa\in\faces$.  We write $h_i:=h_{\el_i}$ 
and $\rho_i:=\rho_{\el_i}$ for short, $i=1,2$, and set 
\[
 \ShapePar_\fa
 :=
 \frac{\max\{h_1,h_2\}}{\min\{\rho_1,\rho_2\}}
\quad\text{and}\quad
 h_\fa
 :=
 \frac{\min\{|\el_1|,|\el_2|\}}{|\fa|}.
\]
Moreover denote by $P_\fa$ the best approximation in 
$S|_\omega:=S^{\ell,0}(\{\el_1,\el_2\})$ with respect to the reaction-diffusion 
norm $\tnorm{\cdot}_\omega$, whence
\begin{equation}
\label{PE_ba}
 \tnorm{u-P_\fa}_{\omega}
 =
 \inf_{P\in S|_\omega}\tnorm{u-P}_{\omega}. 
\end{equation}
Section \ref{S:counterexample} shows that the best errors on the single 
elements are missing something for a robust upper bound.  We first determine a 
quantity that provides a remedy.
\begin{lem}[Jump augmentation]
\label{L:UbdWithJump}
Let $C_{P}>0$ be a constant and $u\in H^1(\omega)$.  If 
$P_i\in\mathbb{P}_\ell(\el_i)$ satisfy
\begin{equation}
\label{A:Poincare}
 \norm{u - P_i}_{\el_i}
 \leq
 C_{P} h_i \norm{\nabla(u - P_i)}_{\el_i}, 
 \qquad
 i=1,2,
\end{equation}
then there holds
\[
 \tnorm{u-P_\fa}_\omega
 \leq
 C \left(
  h_\fa^{1/2} \norm{P_1-P_2}_\fa
  + \sum_{i=1}^2 \tnorm{u-P_i}_{\el_i}
 \right),
\]
where $C$ depends on $C_{P}$, $\ell$, $d$, $\ShapePar_\fa$, 
but is independent of $\veps$.
\end{lem}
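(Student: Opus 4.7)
The plan is to exhibit a single continuous piecewise polynomial
$P\in\LFEspace{\omega}$, built from $P_1$ and $P_2$, so that the obvious
bound $\tnorm{u-P_\fa}_\omega\leq\tnorm{u-P}_\omega$ can be handled by the
triangle inequality. Without loss of generality assume $|\el_1|\leq|\el_2|$,
so that $h_\fa=|\el_1|/|\fa|$. Set $g:=(P_2-P_1)|_\fa$ and define $P:=P_2$
on $\el_2$ and $P:=P_1+Q$ on $\el_1$, where $Q\in\mathbb{P}_\ell(\el_1)$ is
the unique polynomial with nodal values $Q(z)=g(z)$ for $z\in\mathcal{N}_\fa$
and $Q(z)=0$ for all other nodes of $\el_1$. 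The traces of $P_1+Q$ and $P_2$
on $\fa$ agree at every node of $\fa$, so $P$ is continuous across $\fa$ and
lies in $\LFEspace{\omega}$. The triangle inequality then reduces the task to
controlling $\tnorm{Q}_{\el_1}$, since
\[
 \tnorm{u-P_\fa}_\omega
 \leq
 \tnorm{u-P_1}_{\el_1}+\tnorm{u-P_2}_{\el_2}+\tnorm{Q}_{\el_1}.
\]

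\textbf{Scaling of the correction and the main obstacle.}
Pulling $Q$ back to the reference simplex $\hat\el$, the subspace of
polynomials of degree $\leq\ell$ vanishing at every reference node off
$\hat\fa$ is, via restriction, isomorphic to $\mathbb{P}_\ell(\hat\fa)$;
equivalence of norms on this finite-dimensional space yields
$\|\hat Q\|_{\hat\el}+\|\nabla\hat Q\|_{\hat\el}\lesssim\|\hat Q\|_{\hat\fa}$.
Transforming back as in \eqref{phi_scaling}--\eqref{phi_scaling_grad} with
$|\det B|=|\el_1|/|\hat\el|$ and $\|B^{-T}\|\lesssim\hat h/\rho_1$ delivers
\[
 \|Q\|_{\el_1}\lesssim h_\fa^{1/2}\|g\|_\fa,
 \qquad
 \|\nabla Q\|_{\el_1}\lesssim\rho_1^{-1}h_\fa^{1/2}\|g\|_\fa,
\]
with hidden constants depending only on $\ell$ and $d$; consequently
\[
 \tnorm{Q}_{\el_1}^2
 \lesssim
 h_\fa\|g\|_\fa^2+\veps\,\rho_1^{-2}h_\fa\|g\|_\fa^2.
\]
The first summand is already of the desired shape; the main obstacle is the
factor $\veps/\rho_1^2$ in the second, which is not controlled uniformly in
$\veps$.

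\textbf{Resolution via case split using the trace inequality and
\eqref{A:Poincare}.}
If $\veps\leq\rho_1^2$, then $\veps\rho_1^{-2}\leq1$ and the troublesome term
is already absorbed by $h_\fa\|g\|_\fa^2$. Otherwise I estimate $\|g\|_\fa$
indirectly: writing $g=(P_2-u)+(u-P_1)$ on $\fa$ and applying the standard
trace inequality gives
\[
 \|g\|_\fa^2
 \lesssim
 \sum_{i=1}^2\bigl(h_i^{-1}\|u-P_i\|_{\el_i}^2
  +h_i\|\nabla(u-P_i)\|_{\el_i}^2\bigr),
\]
and the hypothesis \eqref{A:Poincare} absorbs the first summand into the
second, producing $\|g\|_\fa^2\lesssim\sum_i h_i\|\nabla(u-P_i)\|_{\el_i}^2$.
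Since $h_\fa\leq h_1/d$ (from $|\el_1|/|\fa|$ being the opposite height
divided by $d$) and $h_i/\rho_1\leq\ShapePar_\fa$, one has
$h_\fa h_i/\rho_1^2\lesssim\ShapePar_\fa^2$, and therefore
\[
 \veps\rho_1^{-2}h_\fa\|g\|_\fa^2
 \lesssim
 \ShapePar_\fa^2\sum_{i=1}^2\veps\|\nabla(u-P_i)\|_{\el_i}^2
 \leq
 \ShapePar_\fa^2\sum_{i=1}^2\tnorm{u-P_i}_{\el_i}^2.
\]
Combining both cases for $\tnorm{Q}_{\el_1}^2$ and taking square roots in the
triangle inequality of the first paragraph yields the claim, with a constant
depending only on $C_P$, $\ell$, $d$, and $\ShapePar_\fa$, and in particular
independent of $\veps$.
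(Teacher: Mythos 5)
Your proof is correct and follows essentially the same route as the paper: you build the identical glued approximant (the paper's $\widetilde P$, correcting $P_1$ at the face nodes by the jump values on the smaller element), reduce to bounding the correction in $\tnorm{\cdot}_{\el_1}$, and handle the problematic $\veps\|\nabla Q\|^2$ contribution by trading the jump for gradient errors via a trace inequality combined with \eqref{A:Poincare}. The only cosmetic differences are that you use reference-element norm equivalence instead of the explicit nodal/dual-basis scalings \eqref{phi_scaling}--\eqref{phi_scaling_grad}, and an explicit case split at $\veps=\rho_1^2$ where the paper takes the minimum of two bounds valid for all $\veps$.
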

\begin{proof}
We may assume $|\el_2|\geq|\el_1|$ without loss of generality. Define 
$\widetilde{P}\in S|_\omega$ by
\begin{equation*}
 \widetilde{P}(z)
 :=
 \left\{\begin{array}{ll}
  P_i(z)& z\in\mathcal{N}_{\el_i\setminus \fa} \text{ with }i\in\{1,2\},
 \\[2mm]
  \displaystyle P_2(z)&z\in\mathcal{N}_\fa.
 \end{array}\right.
\end{equation*}
Thanks to \eqref{PE_ba} we have 
$\tnorm{u-P_\fa}_{\omega}\leq\tnorm{u-\widetilde{P}}_{\omega}$.  Observing
\begin{equation*}
 \tnorm{u-\widetilde{P}}_{\el_2}
 =
 \tnorm{u-P_2}_{\el_2}
\quad\text{and}\quad
 \tnorm{u-\widetilde{P}}_{\el_1}
 \leq
 \tnorm{u-P_1}_{\el_1}+\tnorm{P_1-\widetilde{P}}_{\el_1},
\end{equation*}
we are left with establishing a suitable bound for 
$\tnorm{P_1-\widetilde{P}}_{\el_1}$.  To this end, we proceed similarly to 
\S\ref{S:eps-norm}.  We expand $P_1-\widetilde{P}$ with respect to the nodal 
basis functions on $\el_1$ and obtain
\begin{align}
\label{expansion}
 \tnorm{P_1-\widetilde{P}}_{\el_1}&
 \leq
 \textstyle
 \sum_{z\in\mathcal{N}_\fa}
 |P_1(z)-P_2(z)| \, \tnorm{\phi_z}_{\el_1}
\end{align}
because $|P_1(z)-\widetilde{P}(z)|=0$ for every 
$z\in\mathcal{N}_{\el_1\setminus\fa}$.  For the shared nodes 
$z\in\mathcal{N}_\fa$, we have
\begin{align}
\label{diffPiPt}
 |P_1(z)-P_2(z)|
 &\leq
 \int_\fa|(P_1-P_2)\psi_z^\fa|
 \leq
 \frac{|\Hat{\fa}|^{1/2}}{|\fa|^{1/2}}
  \norm{P_1-P_2}_\fa \|\Hat{\psi}_{\Hat{z}}\|_{\Hat{\fa}}
\end{align}
by \eqref{psi_scaling}.  In view of \eqref{byL2} and \eqref{phi_scaling}, the 
last two inequalities imply
\begin{equation}
\label{byJump} 
\begin{aligned}
 \tnorm{P_1-\widetilde{P}}_{\el_1}
 &\leq
 \sum_{z\in\mathcal{N}_\fa}
  |P_1(z)-P_2(z)|
  \sqrt{1+\veps\frac{\Cnorm^2 \Hat{h}^2}{\rho^2_1}}
  \norm{\phi_z}_{\el_1}
\\
 &\leq
 m_0 
 \sqrt{1+\veps\frac{\Cnorm^2 \Hat{h}^2}{\rho^2_1}}
 h_\fa^{1/2}
 \norm{P_1-P_2}_\fa
\end{aligned}
\end{equation}
with
\[ 
 m_0
 =
 \frac{|\Hat{\fa}|^{1/2}}{|\Hat{\el}|^{1/2}}
 \sum_{\Hat{z}\in\Hat{\fa}}
  \|\Hat{\psi}_{\Hat{z}}\|_{\Hat{\fa}}
  \|\Hat{\phi}_{\Hat{z}}\|_{\Hat{\el}}.
\]
To derive an alternative bound when $\veps$ is big, we first observe
that the trace identity \eqref{e:trace} and the Poincar\'e inequalities 
\eqref{A:Poincare} yield 
\begin{equation}
\label{e:trace+poin}
 \norm{u-P_i}^2_\fa
 \leq
 C_{P} \left(
  C_{P} +\frac{2}{d}
 \right)
 \frac{ h^2_i |\fa|}{|\el_i|}
  \norm{\nabla(u-P_i)}^2_{\el_i}, \quad i=1,2.
\end{equation}
Using this in \eqref{diffPiPt} gives 
\begin{align*}
 |P_1(z)-P_2(z)|
 &\leq
 \sqrt{
  C_{P} \left(
   C_{P} +\frac{2}{d}
  \right)
 |\Hat{\fa}|
 } \,
 \|\Hat{\psi}_{\Hat{z}}\|_{\Hat{\fa}}
 \sum_{j=1}^2
  \frac{h_{j}}{|\el_j|^{1/2}}\norm{\nabla(u-P_j)}_{\el_j},
\end{align*}
which together with \eqref{expansion}, \eqref{byH1} and 
\eqref{phi_scaling_grad} implies
\begin{equation}
\label{byTrace} 
\begin{aligned}
 \tnorm{P_1-\widetilde{P}}_{\el_1}
 &\leq
 \sum_{z\in\mathcal{N}_\fa}
  |P_1(z)-P_2(z)|
  \sqrt{1 + \frac1\veps \frac{h_1^2}{\cnorm^2 \Hat{\rho}^2}} \, 
  \veps^{1/2} \|\nabla\phi_{z}\|_{\el_1}
\\
 &\leq
 m_\infty
 \sqrt{1 + \frac1\veps \frac{h_1^2}{\cnorm^2 \Hat{\rho}^2}} \, 
 \sum_{j=1}^2
  \frac{h_{j}}{\rho_1}
  \veps^{1/2} \norm{\nabla(u-P_j)}_{\el_j},
\end{aligned}
\end{equation}
where 
\[
 m_\infty
 =
 m_\infty(C_{P},\ell,d)
 :=
 \sqrt{
  C_{P} \left(
   C_{P} +\frac{2}{d}
  \right)
 \frac{|\Hat{\fa}|}{|\Hat{\el}|}
 } \, \Hat{h}
 \sum_{\Hat{z}\in\Hat{E}}
 \|\Hat{\psi}_{\Hat{z}}\|_{\Hat{\fa}}
 \|\nabla\Hat{\phi}_{\Hat{z}}\|_{\Hat{\el}}.
\]
Combining \eqref{byJump} and \eqref{byTrace} yields
\[
 \tnorm{P_1-\widetilde P}_{\el_1}
 \leq 
 C\left(
  h_\fa^{1/2} \norm{P_1-P_2}_\fa
  +
  \sum_{i=1}^2\tnorm{u-P_i}_{\el_i}
 \right)
\]
with
\[
 C
 =
 \max\{ m_0, m_\infty \sigma_\fa \}
 \sqrt{1 + \frac{\Cnorm}{\cnorm} \frac{\Hat{h}}{\Hat{\rho}}
  \frac{h_1}{\rho_1} }
\]
and so the claimed inequality is established.
\end{proof}
Next, we check that the jump term in the upper bound in Lemma 
\ref{L:UbdWithJump} does not overestimate, if we choose 
$P_i\in\mathbb{P}_\ell(\el_i)$ as the best approximations to $u$ with respect 
to 
the reaction-diffusion norm $\tnorm{\cdot}$, that is
\begin{align}
\label{Pi_ba}
 \tnorm{u-P_i}_{\el_i}
 &=
 \inf_{P\in\mathbb{P}_\ell(\el_i)}\tnorm{u-P}_{\el_i}
 \quad i=1,2.
\end{align}
This leads to the following characterization of the best error on a pair by the 
best approximation on its single elements.
\begin{thm}[Sharp jump augmentation]
\label{T:ba-elm-pair}
For any $u\in H^1(\omega)$, it holds
\[
 \tnorm{u-P_\fa}_{\omega}
 \approx
 \left(
  h_\fa\norm{P_1-P_2}^2_{\fa}
  + \sum_{i=1}^2 \tnorm{u-P_i}^2_{\el_i}
 \right)^{\frac{1}{2}}
\]
whenever $P_i$ are given by \eqref{Pi_ba}.  The hidden constants depend on 
$\ell$, $d$, $\ShapePar_\fa$, but are independent of $\veps$.
\end{thm}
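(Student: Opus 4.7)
The plan is to prove the two inequalities separately, exploiting the fact that the $P_i$ are chosen according to \eqref{Pi_ba}.

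\emph{Upper bound.} I would reduce to Lemma~\ref{L:UbdWithJump} by verifying hypothesis \eqref{A:Poincare}. The crucial observation is the Galerkin orthogonality of the best approximation in the reaction-diffusion inner product: for every $Q\in\mathbb{P}_\ell(\el_i)$,
\[
 \int_{\el_i}(u-P_i)Q + \veps\int_{\el_i}\nabla(u-P_i)\cdot\nabla Q = 0.
\]
Testing with $Q\equiv 1$ yields $\int_{\el_i}(u-P_i)=0$ independently of $\veps$. The classical Poincar\'e-Wirtinger inequality on the convex simplex $\el_i$ then delivers
\[
 \norm{u-P_i}_{\el_i} \leq \frac{h_i}{\pi}\norm{\nabla(u-P_i)}_{\el_i},
\]
i.e.\ \eqref{A:Poincare} with $C_{P}=1/\pi$. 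Lemma~\ref{L:UbdWithJump} applied with this choice then gives the $\lesssim$ direction with a constant depending only on $\ell$, $d$, and $\ShapePar_\fa$.

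\emph{Lower bound.} Because $P_\fa|_{\el_i}\in\mathbb{P}_\ell(\el_i)$, the minimality \eqref{Pi_ba} yields
\[
 \tnorm{u-P_i}_{\el_i} \leq \tnorm{u-P_\fa}_{\el_i},
\]
so that summing over $i=1,2$ gives $\sum_{i=1}^2\tnorm{u-P_i}^2_{\el_i} \leq \tnorm{u-P_\fa}^2_\omega$ by set-additivity of the squared norm. For the jump term, I exploit that $P_\fa$ is continuous across $\fa$ to decompose on the face:
\[
 P_1-P_2 = (P_1-P_\fa) - (P_2-P_\fa) \quad\text{on }\fa.
\]
The polynomial trace inequality $\norm{q}^2_\fa \leq C(\ell,d)\,(|\fa|/|\el_i|)\norm{q}^2_{\el_i}$, applied to $q=P_i-P_\fa\in\mathbb{P}_\ell(\el_i)$ and combined with $h_\fa = \min\{|\el_1|,|\el_2|\}/|\fa| \leq |\el_i|/|\fa|$, yields
\[
 h_\fa\norm{P_i-P_\fa}^2_\fa \leq C(\ell,d)\norm{P_i-P_\fa}^2_{\el_i}.
\]
The triangle inequality against $u$, followed by another application of $\tnorm{u-P_i}_{\el_i}\leq\tnorm{u-P_\fa}_{\el_i}$, bounds the right-hand side by a constant multiple of $\tnorm{u-P_\fa}^2_{\el_i}$. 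Summing over $i=1,2$ gives $h_\fa\norm{P_1-P_2}^2_\fa \lesssim \tnorm{u-P_\fa}^2_\omega$, which together with the previous display establishes the $\gtrsim$ direction.

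\emph{Main obstacle.} The delicate point is making the upper bound robust in $\veps$: the Poincar\'e hypothesis \eqref{A:Poincare} has to hold with a constant $C_{P}$ independent of $\veps$. Here the $\veps$-free orthogonality obtained from testing with constants is decisive—it is precisely this freedom that forces $u-P_i$ to have vanishing mean on each $\el_i$, turning the reaction-diffusion best approximation on a single element into a genuine analogue of the $H^1$-seminorm best approximation of \S\ref{S:H1-seminorm} for the purposes of Lemma~\ref{L:UbdWithJump}. The lower bound is comparatively algebraic and hinges only on the minimality \eqref{Pi_ba}, the polynomial trace inequality, and the specific scaling built into the weight $h_\fa$.
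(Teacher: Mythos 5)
Your proposal is correct and follows essentially the same route as the paper: the upper bound via testing the best-approximation orthogonality with constants to get vanishing means, the classical Poincar\'e inequality on convex domains with $C_P\leq 1/\pi$, and Lemma~\ref{L:UbdWithJump}; the lower bound via minimality \eqref{Pi_ba} for the element terms and the insertion of $P_\fa$ combined with a scaled polynomial trace (inverse) inequality for the jump term. The only cosmetic difference is that the paper derives that trace inequality explicitly through the nodal and dual basis expansions (yielding the constant $\widetilde m_0$), whereas you invoke it as a known scaling estimate.
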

\begin{proof}
We start by bounding $\tnorm{u-P_\fa}_{\omega}$ from above.  The choice 
\eqref{Pi_ba} implies $\int_{\el_i} P_i = \int_{\el_i} u$; see
\eqref{mean_value_RD}. Therefore the classical Poincar\'e inequality on convex 
domains, see \cite{Bebendorf:03,Payne.Weinberger:60}, ensures that 
\eqref{A:Poincare} holds with $C_{P}\leq\frac1\pi$ and Lemma 
\ref{L:UbdWithJump} yields the desired bound.

In order to bound $\tnorm{u-P_\fa}_{\omega}$ from below, we first observe  
\begin{equation}
\label{E:interior}
 \sum_{i=1}^2\tnorm{u-P_i}_{\el_i}^2
 \leq
 \sum_{i=1}^2\tnorm{u-P_\fa}_{\el_i}^2
 =
 \tnorm{u-P_\fa}^2_{\omega}
\end{equation}
using \eqref{Pi_ba}.  Therefore the critical term is the jump term.  To bound 
it, we first add and subtract $P_\fa$ and use the triangle inequality:
\begin{equation}
\label{InsertPE}
 \norm{P_1-P_2}_\fa
 \leq
 \norm{P_1-P_\fa}_\fa
  + \norm{P_2-P_\fa}_\fa.
\end{equation}
Since $P_i$ and $P_\fa$ are both polynomials on $\fa$, we write their expansion 
with respect to the nodal basis functions on $\fa$.  Every 
$z\in\mathcal{N}_\fa$ is also a node of $\el_i$ and so we can use 
\eqref{psi_prop} on $\el_i$.  Using also the Cauchy-Schwarz inequality, the 
scaling properties \eqref{phi_scaling} and \eqref{psi_scaling}, we derive the
following explicit inverse inequality:
\begin{align*}
 h_\fa^{1/2}
 \norm{P_i-P_\fa}_\fa
 &\leq 
 h_\fa^{1/2}
 \sum_{z\in\mathcal{N}_\fa}
  \int_{\el_i}|(P_i-P_\fa)\psi_z^{\el_i}|\norm{\phi_z}_\fa
\\
 &\leq
 \frac{|\el_1|^{1/2}}{|\fa|^{1/2}}
 \sum_{z\in\mathcal{N}_\fa}
  \norm{P_i-P_\fa}_{\el_i}
  \|\Hat{\psi}_{\Hat{z}}\|_{\Hat{\el}}
  \|\Hat{\phi}_{\Hat{z}}\|_{\Hat{\fa}}
  \frac{|\Hat{\el}|^{1/2}}{|\Hat{\fa}|^{1/2}}
  \frac{|\fa|^{1/2}}{|\el_i|^{1/2}}
\\
 &\leq
 \widetilde m_0 \norm{P_i-P_\fa}_{\el_i},
\end{align*}
where we have assumed $|\el_2|\geq|\el_1|$ without loss of generality and
\[
 \widetilde m_0
 =
 \widetilde m_0(\ell,d)
 :=
 \frac{|\Hat{\el}|^{1/2}}{|\Hat{\fa}|^{1/2}}
 \sum_{\Hat{z}\in\Hat{E}}
  \|\Hat{\psi}_{\Hat{z}}\|_{\Hat{\el}}
  \|\Hat{\phi}_{\Hat{z}}\|_{\Hat{\fa}}.
\] 
Inserting $u$ and recalling \eqref{Pi_ba}, we get
\begin{align}
\label{E:jump}
 h_\fa^{1/2}\norm{P_i-P_\fa}_\fa
 &\leq
 2 \widetilde m_0 \tnorm{u-P_\fa}_{\el_i}.
\end{align}
Combining \eqref{E:interior}, \eqref{InsertPE} and \eqref{E:jump}, we conclude
\[
 h_\fa \norm{P_1-P_2}^2_\fa
 +
 \sum_{i=1}^2 \tnorm{u-P_i}^2_{\el_i}
 \leq 
 \left(1+8 \widetilde m_0^2\right) \tnorm{u-P_\fa}^2_{\omega},
\]
where the constant depends only on $\ell$ and $d$.
\end{proof}
Theorem \ref{T:ba-elm-pair} shows that best approximations on elements 
augmented with interelement jumps provide also a robust localization.   
As a consequence, the non-robustness in \S\ref{S:counterexample} is caused 
by the absence of these jump terms.  This fact is illustrated by  
the following corollary for the limiting case $\veps=0$.
\begin{cor}[Sharp jump augmentation for $L^2$]
Let $u\in L^2(\omega)$ and denote by $P_\fa$, $P_i$ the best approximations 
to $u$ in $\LFEspace{\omega}$ and $\mathbb{P}_\ell(\el_i)$, 
respectively with respect to the $L^2$-norm. Then
\[
\norm{u-P_\fa}_{\omega}\approx\left(h_\fa\norm{P_1-P_2}^2_{\fa}+\sum_{i=1}
^2\norm{u-P_i}^2_{\el_i}\right)^{\frac{1}{2}},
\]
where the hidden constants depend on $\ell$ and $d$, but are independent of 
$\ShapePar_{\fa}$.
\end{cor}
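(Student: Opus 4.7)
The plan is to mimic the proof of Theorem \ref{T:ba-elm-pair} while observing that the passage to $\veps=0$ removes every step that invokes the shape parameter $\ShapePar_\fa$.

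For the upper bound, I would follow the construction of Lemma \ref{L:UbdWithJump}: assume without loss of generality $|\el_2|\geq|\el_1|$, define the piecewise polynomial $\widetilde P\in\LFEspace{\omega}$ by $\widetilde P(z)=P_i(z)$ for $z\in\mathcal{N}_{\el_i\setminus\fa}$ and $\widetilde P(z)=P_2(z)$ for $z\in\mathcal{N}_\fa$, and use $\norm{u-P_\fa}_\omega\leq\norm{u-\widetilde P}_\omega$. Since $\widetilde P=P_2$ on $\el_2$, it remains to control $\norm{P_1-\widetilde P}_{\el_1}$. Expanding in the nodal basis on $\el_1$ and reusing the pointwise estimate \eqref{diffPiPt} together with the scaling \eqref{phi_scaling}, one obtains, exactly as in the derivation of \eqref{byJump} but without the extra $\bigl(1+\veps\Cnorm^2\Hat{h}^2/\rho_1^2\bigr)^{1/2}$ factor, that $\norm{P_1-\widetilde P}_{\el_1}\leq m_0\,h_\fa^{1/2}\norm{P_1-P_2}_\fa$ with $m_0$ depending only on $\ell$ and $d$. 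Crucially, the Poincar\'e hypothesis \eqref{A:Poincare} and the trace-based alternative \eqref{byTrace} play no role here, which is precisely why $\ShapePar_\fa$ drops out.

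For the lower bound, I would transcribe the argument of Theorem \ref{T:ba-elm-pair} verbatim to the $L^2$ setting. The $L^2$-best approximation property of $P_i$ on $\el_i$ yields $\sum_{i=1}^2\norm{u-P_i}^2_{\el_i}\leq\sum_{i=1}^2\norm{u-P_\fa}^2_{\el_i}=\norm{u-P_\fa}^2_\omega$. For the jump, the triangle inequality gives $\norm{P_1-P_2}_\fa\leq\norm{P_1-P_\fa}_\fa+\norm{P_2-P_\fa}_\fa$, and the explicit inverse inequality derived in \eqref{E:jump} provides $h_\fa^{1/2}\norm{P_i-P_\fa}_\fa\leq 2\widetilde m_0\,\norm{u-P_\fa}_{\el_i}$, where the constant $\widetilde m_0$ depends only on $\ell$ and $d$. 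Squaring and summing closes the lower bound.

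The only real work is bookkeeping: one must check that every occurrence of $\ShapePar_\fa$ in the proofs of Theorem \ref{T:ba-elm-pair} and Lemma \ref{L:UbdWithJump} originates either in the Poincar\'e hypothesis \eqref{A:Poincare} or in the $H^1$-based alternative \eqref{byTrace}, both of which become dispensable once the diffusion term vanishes. I do not anticipate any genuine obstacle; the key is to avoid routing through the reaction-diffusion lemma and instead rerun only its $L^2$ subargument, so that the resulting constant depends solely on $\ell$ and $d$.
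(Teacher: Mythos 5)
Your proposal is correct and is essentially the paper's own argument: the paper's proof of this corollary consists precisely of the instruction to rerun the proof of Theorem \ref{T:ba-elm-pair} keeping only the $L^2$-part of the norm, which is what you do. You also correctly identify the one point worth making explicit, namely that the $\ShapePar_\fa$-dependence enters only through the Poincar\'e hypothesis \eqref{A:Poincare} and the alternative bound \eqref{byTrace}, both of which become superfluous at $\veps=0$, leaving only $m_0$ and $\widetilde m_0$, which depend solely on $\ell$ and $d$.
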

\begin{proof}
Consider only the $L^2$-part of the reaction-diffusion norm in the proof of 
Theorem \ref{T:ba-elm-pair}.
\end{proof}

The localization associated with the right-hand side in Theorem 
\ref{T:ba-elm-pair} is less costly to compute than the one in Theorem
\ref{T:pair_dec}.  The jumps between the best approximations however require 
communication between elements.  At the price of a slight overestimation, 
the following proposition, which establishes the second part of 
\eqref{replacements}, uses only best errors on elements and in particular 
avoids this communication. 
\begin{prop}[Trace augmentation]
\label{P:only_el}
For any $u\in H^1(\omega)$, it holds
\[
 \tnorm{u-P_\fa}_{\omega}
 \leq
 C \sum_{i=1}^2
  \inf_{P\in\mathbb{P}_\ell(\el_i)}
  \left(
   \tnorm{u-P}_{\el_i}^2
   +
   \frac{|\el_i|}{|\partial\el_i|}\norm{u-P}_{\partial\el_i}^2
 \right)^{1/2}.
\]
where the constant $C$ depends on $\ell$, $d$, $\ShapePar_\fa$, but is 
independent of $\veps$.
\end{prop}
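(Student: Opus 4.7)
The plan is to apply the jump-augmentation Lemma \ref{L:UbdWithJump} with the single-element reaction-diffusion best approximations, and then dominate the resulting interelement jump by boundary traces. For $i=1,2$, I first take $P_i\in\mathbb{P}_\ell(\el_i)$ as the best approximation to $u$ on $\el_i$ with respect to $\tnorm{\cdot}_{\el_i}$, i.e., $P_i$ as in \eqref{Pi_ba}. Testing the associated local Ritz equation with the constant $v\equiv 1$ yields $\int_{\el_i}(u-P_i)=0$ in the spirit of \eqref{mean_value_RD}, so the classical Poincar\'e inequality on the convex simplex $\el_i$ ensures that \eqref{A:Poincare} holds with $C_{P}\leq 1/\pi$. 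Lemma \ref{L:UbdWithJump} therefore gives
\[
 \tnorm{u-P_\fa}_\omega
 \lesssim
 h_\fa^{1/2}\norm{P_1-P_2}_\fa
 + \sum_{i=1}^2 \tnorm{u-P_i}_{\el_i},
\]
with constants depending on $\ell$, $d$, and $\ShapePar_\fa$.

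Next I would convert the jump into boundary traces. Inserting $u$ and using the triangle inequality gives $\norm{P_1-P_2}_\fa\leq\sum_{i}\norm{u-P_i}_\fa$. Shape regularity relates $|\fa|$ and $|\partial\el_i|$ via $|\partial\el_i|\leq C(\ShapePar_\fa)|\fa|$, so from the definition of $h_\fa$ one has $h_\fa\leq |\el_i|/|\fa|\lesssim |\el_i|/|\partial\el_i|$. Combined with $\norm{u-P_i}_\fa\leq\norm{u-P_i}_{\partial\el_i}$ this yields
\[
 h_\fa^{1/2}\norm{P_1-P_2}_\fa
 \lesssim
 \sum_{i=1}^2
  \Bigl(\tfrac{|\el_i|}{|\partial\el_i|}\Bigr)^{1/2}
  \norm{u-P_i}_{\partial\el_i}.
\]

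The last step is to verify that $P_i$ is near optimal for the combined local functional $F_i(P):=\tnorm{u-P}_{\el_i}^2 + \tfrac{|\el_i|}{|\partial\el_i|}\norm{u-P}_{\partial\el_i}^2$. By construction $\tnorm{u-P_i}_{\el_i}\leq\tnorm{u-P}_{\el_i}$ for every $P\in\mathbb{P}_\ell(\el_i)$. Applying a standard polynomial inverse-trace inequality $\norm{q}_{\partial\el_i}^2\lesssim \tfrac{|\partial\el_i|}{|\el_i|}\norm{q}_{\el_i}^2$ to $q=P-P_i$, and combining with the triangle inequality in $L^2(\partial\el_i)$ and $L^2(\el_i)$, gives
\[
 \tfrac{|\el_i|}{|\partial\el_i|}\norm{u-P_i}_{\partial\el_i}^2
 \lesssim
 \tfrac{|\el_i|}{|\partial\el_i|}\norm{u-P}_{\partial\el_i}^2
 + \tnorm{u-P}_{\el_i}^2,
\]
so $F_i(P_i)\lesssim F_i(P)$. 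Inserting these bounds into the previous two displays and taking the infimum over $P$ inside each summand produces the claim.

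The main obstacle I anticipate is bookkeeping the $\ShapePar_\fa$-dependence: the conversion of $h_\fa^{1/2}\norm{\cdot}_\fa$ to $(|\el_i|/|\partial\el_i|)^{1/2}\norm{\cdot}_{\partial\el_i}$, as well as the polynomial inverse-trace constant on $\el_i$, both rely on shape regularity of the pair $\el_1\cup\el_2$. All estimates remain $\veps$-uniform once $P_i$ is chosen so that the mean-value condition needed by Lemma \ref{L:UbdWithJump} is satisfied; no new $\veps$-sensitive inequality is invoked beyond the one already contained in that lemma.
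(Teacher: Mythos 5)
Your argument is correct, but it reaches the claim by a different route than the paper. The paper takes $P_i$ to be the minimizer of the combined functional $F_i(P)=\tnorm{u-P}_{\el_i}^2+\frac{|\el_i|}{|\partial\el_i|}\norm{u-P}_{\partial\el_i}^2$ itself; then no near-optimality step is needed, and the whole effort goes into verifying the Poincar\'e hypothesis \eqref{A:Poincare} for this $P_i$, whose Euler--Lagrange equation only yields the mixed zero-mean condition $\int_{\el_i}(u-P_i)+\frac{|\el_i|}{|\partial\el_i|}\int_{\partial\el_i}(u-P_i)=0$. This forces an orthogonal splitting of $\norm{u-P_i}_{\el_i}^2$ and a weighted use of the trace identity \eqref{e:trace}, producing an explicit $C_P$. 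You instead keep the element-wise reaction-diffusion best approximations \eqref{Pi_ba}, for which \eqref{A:Poincare} is immediate from \eqref{mean_value_RD} exactly as in Theorem \ref{T:ba-elm-pair}, apply Lemma \ref{L:UbdWithJump}, convert the jump via $\norm{P_1-P_2}_\fa\leq\sum_i\norm{u-P_i}_\fa$ and $h_\fa\lesssim|\el_i|/|\partial\el_i|$ (both steps also appear in the paper's proof, and the latter correctly costs a factor of $\ShapePar_\fa$ through $|\partial\el_i|\lesssim\ShapePar_\fa|\fa|$), and then pay for the ``wrong'' choice of $P_i$ with a polynomial inverse-trace inequality showing $F_i(P_i)\lesssim F_i(P)$ for all $P\in\mathbb{P}_\ell(\el_i)$. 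That last quasi-optimality step is the genuinely new ingredient relative to the paper and is sound: $\norm{P-P_i}_{\partial\el_i}^2\lesssim\frac{|\partial\el_i|}{|\el_i|}\norm{P-P_i}_{\el_i}^2$ holds with a constant depending only on $\ell$ and $d$ by scaling each face to the reference simplex, and $\norm{P-P_i}_{\el_i}\leq 2\tnorm{u-P}_{\el_i}$ by optimality of $P_i$, so no $\veps$-dependence enters. The trade-off is that your constants are less explicit (they absorb the inverse-trace constant), while the paper's route requires the more delicate Poincar\'e verification but yields explicit bounds; both are uniform in $\veps$ and depend on $\ShapePar_\fa$ only through the same two conversions.
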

\begin{proof}
Let $P_i$, $i=1,2$, be the best approximations associated with the two infima 
in the claimed bound.  It suffices to verify that they satisfy 
\eqref{A:Poincare}.  In fact, inserting $u$ in the jump term of Lemma 
\ref{L:UbdWithJump} yields the claim.  Fix $i=1,2$ and write $\el:=\el_i$ and 
$v:=u-P_i$ for short.  Since
\[
 \int_\el v + \frac{|\el|}{|\partial\el|} \int_{\partial\el} v
 = 
 0,
\]
we can write
\[
 \norm{v}_\el^2
 =
 \norm{v-\frac{1}{|\el|}\int_\el v}_\el^2
 +
 \frac1{4|\el|}
 \left(
  \frac{|\el|}{|\partial\el|} \int_{\partial\el} v
  -
  \int_\el v
 \right)^2.
\]
Adding the trace identity, cf.\ \eqref{e:trace}, for every face of $\el$ in a 
suitable weighted manner, we obtain a vector field $q_\el$ such that
\[
 \left|
  \frac{|\el|}{|\partial\el|} \int_{\partial\el} v
  -
  \int_\el v
 \right|
 =
 \left|
  \int_\el q_\el\cdot\nabla v
 \right|
 \leq
 \frac{h_\el}d |\el|^{1/2} \norm{\nabla v}_\el.
\]
Consequently the classical Poincar\'e inequality on convex 
domains, see \cite{Bebendorf:03,Payne.Weinberger:60}, shows 
that \eqref{A:Poincare} holds with $C_{P}\leq \left( \frac1\pi+\frac1{4d^2} 
\right)^{1/2}$.
\end{proof}

\section{Robust localization for tree approximation}
\label{S:i4ta}
%
%
Adaptive tree approximation of P.\ Binev and R.\ DeVore \cite{Binev.DeVore:04} 
constructs near best meshes within a hierarchy by means of so-called local 
error functionals.  In this section we propose local error functionals that are 
suitable for the reaction-diffusion norm and, by modifying Theorem 
\ref{T:pair_dec}, we show that they ensure a robust performance of tree 
approximation.

We start by fixing  our setting for tree approximation.  As refinement 
procedure, we adopt bisection of (tagged) simplices as described, e.g.,  
\cite[\S4.1]{Nochetto.Siebert.Veeser:09} or R.\ Stevenson \cite{Stevenson:08}. 
 Recall that the refinement edge of a simplex is the edge that will be halved 
when the simplex is bisected and that the refinement edges of the two children 
simplices are assigned in a unique manner.

Let $\tri_0$ be a conforming mesh of $\Omega$ into simplices such that the 
so-called matching condition, see e.g.\ 
\cite[\S4.2]{Nochetto.Siebert.Veeser:09}, is satisfied.  Denote by 
$\mathbb{T}_c$ the set of all conforming meshes that can be generated from 
$\tri_0$ by successive bisections.  Thanks to the matching condition it 
contains 
in particular all uniform refinements of $\tri_0$.  Moreover we have a graph 
$\mtree$ where the nodes correspond to simplices and each simplex is connected 
with its two children.  This graph is a forest of infinite binary trees, whose 
roots are the elements of $\tri_0$, and it is often called master tree.  Every 
conforming mesh $\tri\in\mathbb{T}_c$ is represented by a subtree $\tree$ of 
the master tree $\mtree$.  The set of leaves and interior nodes of a tree 
$\tree$ are denoted by $\leaves{\tree}$ and $\intNodes{\tree}$, 
respectively.

A local error functional is a map $e$ that associates a positive real 
$e(\el)\geq0$ to any simplex $\el\in\mtree$.  Roughly speaking, tree 
approximation uses $e$ to construct trees $\tree$ that almost minimize the 
global error functional
\begin{equation}
\label{GErrFct}
 \Err(\tree)
 :=
 \sum_{\el\in\mathcal{L}(\tree)}\err{\el}
\end{equation}
within trees of similar cardinality.

Notice that the local best errors in Theorem \ref{T:pair_dec} cannot be 
combined to define an error functional due to the dependence of $\pair{E}$ on 
$\tri$.  To remedy, we mimic the idea of `minimal ring' in P.\ Binev et al.\ 
\cite{Binev.Dahmen.DeVore:04} and introduce the following variant of 
$\pair{E}$: 
 given a face $\fa$ of any 
simplex $\el\in\mtree$, we define
\[
 \Mpair{\fa}
 :=
 \bigcap_{\tri\in\mathbb{T}_c:\fa\text{ is a face of }\tri} \pair{\fa}.
\]
If $\fa$ is an interelement face of some mesh $\tri\in\mathbb{T}_c$, then 
$\Mpair{\fa}$ is the union of two elements $\el'_1$ and $\el'_2$ that belong 
to some virtual refinement of $\tri$ and are such that $\el'_1\cap\el'_2=\fa$. 
See also Figure \ref{F:min_pair}.
\newrgbcolor{zzttqq}{0.8 0.8 0.8}
\begin{figure}
\begin{center}
\begin{pspicture*}(-0.27,-0.25)(2.25,4.25)
\psline(0,0)(0,4)
\psline(2,0)(2,4)
\psline(0,0)(2,0)
\psline(0,4)(2,4)
\psline(0,0)(2,2)
\psline(0,4)(2,2)
\psline[linewidth=2pt](0,2)(2,2)
\psline(0,4)(0.25,4.25)
\psline(0,4)(0,4.25)
\psline(-0.25,4)(0,4)
\psline(-0.25,1.75)(0,2)
\psline(-0.25,2.25)(0,2)
\psline(-0.25,0)(0,0)
\psline(0,-0.25)(0,0)
\psline(1.75,-0.25)(2,0)
\psline(2,-0.25)(2,0)
\psline(2,0)(2.25,0)
\psline(2,2)(2.25,1.75)
\psline(2,2)(2.25,2.25)
\psline(2,4)(2.25,4)
\psline(2,4)(2,4.25)
\rput[b](1,2.1){$0$}
\rput[r](-0.1,3){$0$}
\rput[r](-0.1,1){$0$}
\rput[bl](1,3.1){$-1$}
\rput[tl](1,0.9){$-1$}
\end{pspicture*}
\quad\quad\quad
\begin{pspicture*}(-0.27,-0.25)(2.25,4.25)
\psline(0,0)(0,4)
\psline(2,0)(2,4)
\psline(0,0)(2,0)
\psline(0,4)(2,4)
\psline(0,0)(2,2)
\psline(0,4)(2,2)
\psline[linewidth=2pt](0,2)(2,2)
\psline(0,2)(2,0)
\psline(0,4)(0.25,4.25)
\psline(0,4)(0,4.25)
\psline(-0.25,4)(0,4)
\psline(-0.25,1.75)(0,2)
\psline(-0.25,2.25)(0,2)
\psline(-0.25,0)(0,0)
\psline(0,-0.25)(0,0)
\psline(1.75,-0.25)(2,0)
\psline(2,-0.25)(2,0)
\psline(2,0)(2.25,0)
\psline(2,2)(2.25,1.75)
\psline(2,2)(2.25,2.25)
\psline(2,4)(2.25,4)
\psline(2,4)(2,4.25)
\rput[b](1,2.1){$0$}
\rput[r](-0.1,3){$0$}
\rput[bl](1,3.1){$-1$}
\rput[tr](0.4,1.4){$1$}
\rput[tl](1.6,1.4){$1$}
\end{pspicture*}
\quad\quad\quad
\begin{pspicture*}(-0.27,-0.25)(2.25,4.25)
\psline(0,0)(0,4)
\psline(2,0)(2,4)
\psline(0,0)(2,0)
\psline(0,4)(2,4)
\psline(0,0)(2,2)
\psline(0,4)(2,2)
\psline[linewidth=2pt](0,2)(2,2)
\psline(0,2)(2,0)
\psline(0,2)(2,4)
\psline(0,4)(0.25,4.25)
\psline(0,4)(0,4.25)
\psline(-0.25,4)(0,4)
\psline(-0.25,1.75)(0,2)
\psline(-0.25,2.25)(0,2)
\psline(-0.25,0)(0,0)
\psline(0,-0.25)(0,0)
\psline(1.75,-0.25)(2,0)
\psline(2,-0.25)(2,0)
\psline(2,0)(2.25,0)
\psline(2,2)(2.25,1.75)
\psline(2,2)(2.25,2.25)
\psline(2,4)(2.25,4)
\psline(2,4)(2,4.25)
\rput[b](1,2.1){$0$}
\rput[tr](0.4,1.4){$1$}
\rput[tl](1.6,1.4){$1$}
\rput[br](0.4,2.6){$1$}
\rput[bl](1.6,2.6){$1$}
\end{pspicture*}
\\[5ex]
\begin{pspicture*}(-0.27,-0.25)(2.25,4.25)
\pspolygon[linestyle=none,fillstyle=solid,fillcolor=zzttqq,opacity=0.33](0,2)(1,
3)(2,2)(1,1)
\psline(0,0)(0,4)
\psline(2,0)(2,4)
\psline(0,0)(2,0)
\psline(0,4)(2,4)
\psline(0,0)(2,2)
\psline(0,4)(2,2)
\psline[linewidth=2pt](0,2)(2,2)
\psline[linestyle=dashed](0,2)(2,0)
\psline[linestyle=dashed](0,2)(2,4)
\psline(0,4)(0.25,4.25)
\psline(0,4)(0,4.25)
\psline(-0.25,4)(0,4)
\psline(-0.25,1.75)(0,2)
\psline(-0.25,2.25)(0,2)
\psline(-0.25,0)(0,0)
\psline(0,-0.25)(0,0)
\psline(1.75,-0.25)(2,0)
\psline(2,-0.25)(2,0)
\psline(2,0)(2.25,0)
\psline(2,2)(2.25,1.75)
\psline(2,2)(2.25,2.25)
\psline(2,4)(2.25,4)
\psline(2,4)(2,4.25)
\rput[b](1,2.1){$0$}
\rput[tr](0.4,1.4){$1$}
\rput[tl](1.6,1.4){$1$}
\rput[br](0.4,2.6){$1$}
\rput[bl](1.6,2.6){$1$}
\end{pspicture*}
\quad\quad\quad
\begin{pspicture*}(-0.27,-0.25)(2.25,4.25)
\pspolygon[linestyle=none,fillstyle=solid,fillcolor=zzttqq,opacity=0.33](0,2)(1,
3)(2,2)(1,1)
\psline(0,0)(0,4)
\psline(2,0)(2,4)
\psline(0,0)(2,0)
\psline(0,4)(2,4)
\psline(0,0)(2,2)
\psline(0,4)(2,2)
\psline[linewidth=2pt](0,2)(2,2)
\psline(0,2)(2,0)
\psline[linestyle=dashed](0,2)(2,4)
\psline(0,4)(0.25,4.25)
\psline(0,4)(0,4.25)
\psline(-0.25,4)(0,4)
\psline(-0.25,1.75)(0,2)
\psline(-0.25,2.25)(0,2)
\psline(-0.25,0)(0,0)
\psline(0,-0.25)(0,0)
\psline(1.75,-0.25)(2,0)
\psline(2,-0.25)(2,0)
\psline(2,0)(2.25,0)
\psline(2,2)(2.25,1.75)
\psline(2,2)(2.25,2.25)
\psline(2,4)(2.25,4)
\psline(2,4)(2,4.25)
\rput[b](1,2.1){$0$}
\rput[tr](0.4,1.4){$1$}
\rput[tl](1.6,1.4){$1$}
\rput[br](0.4,2.6){$1$}
\rput[bl](1.6,2.6){$1$}
\end{pspicture*}
\quad\quad\quad
\begin{pspicture*}(-0.27,-0.25)(2.25,4.25)
\pspolygon[linestyle=none,fillstyle=solid,fillcolor=zzttqq,opacity=0.33](0,2)(1,
3)(2,2)(1,1)
\psline(0,0)(0,4)
\psline(2,0)(2,4)
\psline(0,0)(2,0)
\psline(0,4)(2,4)
\psline(0,0)(2,2)
\psline(0,4)(2,2)
\psline[linewidth=2pt](0,2)(2,2)
\psline(0,2)(2,0)
\psline(0,2)(2,4)
\psline(0,4)(0.25,4.25)
\psline(0,4)(0,4.25)
\psline(-0.25,4)(0,4)
\psline(-0.25,1.75)(0,2)
\psline(-0.25,2.25)(0,2)
\psline(-0.25,0)(0,0)
\psline(0,-0.25)(0,0)
\psline(1.75,-0.25)(2,0)
\psline(2,-0.25)(2,0)
\psline(2,0)(2.25,0)
\psline(2,2)(2.25,1.75)
\psline(2,2)(2.25,2.25)
\psline(2,4)(2.25,4)
\psline(2,4)(2,4.25)
\rput[b](1,2.1){$0$}
\rput[tr](0.4,1.4){$1$}
\rput[tl](1.6,1.4){$1$}
\rput[br](0.4,2.6){$1$}
\rput[bl](1.6,2.6){$1$}
\end{pspicture*}
\caption{\label{F:min_pair} Different triangulations (top row) and minimal pair 
(bottom row, grey shading) associated with an edge $E$ (bold line). The 
labeling is rearranged so that $E$ has label $0$. Dashed lines indicate virtual 
refinement.}
\end{center}
\end{figure}
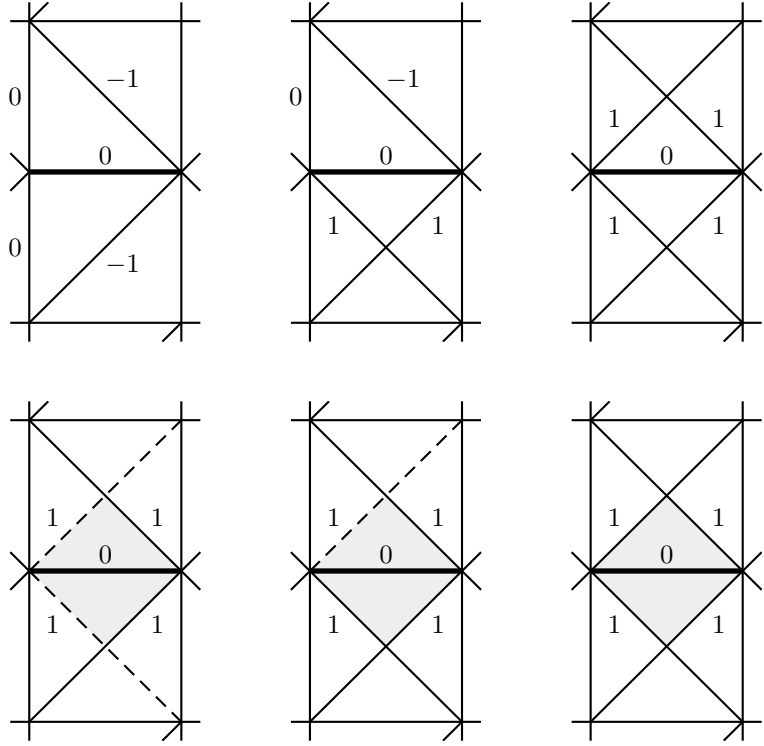
\begin{rem}[Properties of $\omega_\star$]
\label{R:Mpair}
Let $\tri\in\mathbb{T}_c$ be a conforming mesh and denote by $\Afaces$ the set 
of faces, and by $\faces$ its subset of the interelement faces.  There hold:
\begin{enumerate}
 \item[(i)] Let $\el\in\tri$ and $\fa\in\Afaces$ such that 
$\fa\subseteq\partial\el$. Then $\el\subseteq\Mpair{\fa}$ if and only if the 
refinement edge $\ed$ of $\el$ is contained in the face $\fa$.
 \item[(ii)] Let $\fa\in\Afaces$. For every $\el\in\tri$ with 
$\el\subseteq\pair{\fa}$, we have that $\Mpair{\fa}\cap\el$ is an element of 
$\tri$ or of some virtual refinement of $\tri$ so that $|\Mpair{\fa}\cap 
\el|\geq|\el|/2$.
\item[(iii)] For any $\el\in\tri$, there exists $\fa\in\Afaces$ such that 
$\Mpair{\fa}\supseteq\el$. Moreover it is possible to take $\fa\in\faces$, if 
there exists a face $\fa\subset\partial\el$ such that 
$\fa\nsubseteq\partial\Omega$ and $\fa$ contains the refinement edge of $\el$.
 \item[(iv)] The collection $\{\Mpair{\fa}\}_{\fa\in\Afaces}$ is a $d$-finite 
covering of $\tri$.
\end{enumerate}
\end{rem}
\begin{proof}
We start with (i). Since $\fa\subseteq\partial\el$, we have 
$\el\subseteq\pair{\fa}$. We consider the two cases $\ed\subseteq\fa$ and 
$\ed\not\subseteq\fa$ separately.  In the first case, if $\el$ is bisected, 
$\fa$ is also bisected, and there does not exist any conforming refinement 
$\tri'\in\mathbb{T}_c$ of $\tri$ and any descendant $\el'$ of $\el$ such that 
$\el'\subset\omega_{\tri'}(\fa)$.  Hence $\el\subseteq\Mpair{\fa}$. On the 
other hand, consider the second case $\ed\nsubseteq\fa$.  If $\el$ is 
bisected, one of its children $\el'$ still contains $\fa$ and its refinement 
edge is contained in $\fa$. Therefore $\el'\subseteq\Mpair{\fa}$ and 
$\el'\subsetneq\el$ entails $\el\nsubseteq\Mpair{\fa}$.

In order to verify (ii), we note from the proof of (i) that either 
$\Mpair{\fa}\cap\el=\el$ or $\Mpair{\fa}\cap\el=\el'$, where $\el'$ is a child 
of $\el$.  Since bisection yields $|\el'|=|\el|/2$, we have also 
$|\Mpair{\fa} \cap \el|\geq|\el|/2$.

Finally, taking a face which contains the refinement edge of $\el$ and applying 
(i) shows (iii), which then together with (i) implies (iv).
\end{proof}
Motivated by the form \eqref{GErrFct} of the global error functional and the 
fact that the covering in Remark \ref{R:Mpair} (iv) covers faces internally 
(see Definition \ref{e:ric_E}), we introduce the following local error 
functional for the reaction-diffusion norm:
\begin{equation}
\label{def_e}
 \err{\el}
 :=
 \sum_{\fa\text{ face of }\el}\inf_{P\in\LFEspace{\Mpair{\fa}}}
  \tnorm{u-P}^2_{\Mpair{\fa}},
 \qquad
 \el\in\mtree.
\end{equation}
This indeed depends only on $\el$ because each $\Mpair{\fa}$ depends only on 
$\fa$ and the refinement edges in the mesh $\tri_0$.

The finiteness of the covering in Remark \ref{R:Mpair} (iv) is related also the 
following useful property of the local error functional, which 
\cite{Binev.DeVore:04} calls `modified subadditivity'.
\begin{prop}[Weak subadditivity]
\label{P:subadditivity}
The functional $e$ in \eqref{def_e} has the following 
property: if
$\tree\subset\mtree$ is a tree with a single root $\el$ and such that 
$\leaves{\tree}$ is a conforming mesh, then 
\begin{equation}
 \label{subadd_prop}
 \sum_{\el'\in\leaves{\tree}}
  \err{\el'}
 \leq 2d \, \err{\el}.
\end{equation}
\end{prop}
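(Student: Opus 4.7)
The plan is to build a map $\phi$ from faces of leaves to faces of the root $\el$ that nests the corresponding minimal pairs, and then to absorb the over-counting by means of the $d$-finiteness already established in Remark \ref{R:Mpair}(iv). Given a face $\fa'$ of a leaf $\el'\in\leaves{\tree}$, I will let $\phi(\fa')$ be the unique face $\fa$ of $\el$ with $\fa'\subseteq\fa$ when $\fa'\subseteq\partial\el$; when instead $\fa'$ lies in the interior of $\el$ (so that $\el$ must have been bisected in $\tree$), I will set $\phi(\fa')$ to be any face of $\el$ that contains its refinement edge, which is guaranteed by Remark \ref{R:Mpair}(iii).

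The first substantial step is to verify the nested inclusion $\Mpair{\fa'}\subseteq\Mpair{\phi(\fa')}$. For interior $\fa'$ this is immediate, since $\Mpair{\fa'}\subseteq\el$ while Remark \ref{R:Mpair}(i) yields $\el\subseteq\Mpair{\phi(\fa')}$. For a boundary face $\fa'\subseteq\fa=\phi(\fa')$, I will argue that $\fa'$ arises from $\fa$ through a sequence of compatible bisections applied to the two simplices of $\Mpair{\fa}$; as each bisection splits a parent into children that remain inside it, both elements of $\Mpair{\fa'}$ stay inside $\Mpair{\fa}$.

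Once the nested inclusion is in hand, the remaining argument is routine. Writing $\mu(\fa')\in\{1,2\}$ for the number of leaves sharing $\fa'$,
\[
\sum_{\el'\in\leaves{\tree}}\err{\el'}
=\sum_{\fa'\in\Afaces(\leaves{\tree})}\mu(\fa')
 \inf_{P\in\LFEspace{\Mpair{\fa'}}}\tnorm{u-P}^2_{\Mpair{\fa'}}.
\]
For each face $\fa$ of $\el$ I will choose $P_\fa\in\LFEspace{\Mpair{\fa}}$ realising the local infimum; the nested inclusion gives $\inf_{P}\tnorm{u-P}^2_{\Mpair{\fa'}}\leq\tnorm{u-P_\fa}^2_{\Mpair{\fa'}}$ whenever $\phi(\fa')=\fa$. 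Extending $\leaves{\tree}$ to a global conforming mesh in $\mathbb{T}_c$ and invoking Remark \ref{R:Mpair}(iv), the sub-collection $\{\Mpair{\fa'}\}_{\fa'\in\phi^{-1}(\fa)}$ has pointwise overlap at most $d$ inside $\Mpair{\fa}$; integrating yields
\[
\sum_{\fa'\in\phi^{-1}(\fa)}\tnorm{u-P_\fa}^2_{\Mpair{\fa'}}
\leq d\inf_{P\in\LFEspace{\Mpair{\fa}}}\tnorm{u-P}^2_{\Mpair{\fa}},
\]
and summing over the $d+1$ faces of $\el$ while absorbing $\mu\leq2$ closes the estimate with the constant $2d$.

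The main obstacle is the nested-pair claim for boundary faces: it requires tracking how the refinement edges of successive bisections of the two simplices of $\Mpair{\fa}$ are located with respect to $\fa$, to rule out the possibility that one side of the resulting minimal pair leaves $\Mpair{\fa}$. This is driven by the same combinatorial mechanism that underlies Remark \ref{R:Mpair}(i)--(ii); once it is in place, the counting and overlap steps follow directly from the remaining parts of that remark.
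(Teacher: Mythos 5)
Your argument is correct and follows essentially the same route as the paper's proof: assign to each leaf face a face of the root whose minimal pair contains the leaf face's minimal pair, compare the local best errors via this nesting, and combine the multiplicity bound $2$ for faces shared by two leaves with the overlap bound $d$ from Remark \ref{R:Mpair}(iv) to obtain the constant $2d$. The only difference is that you make the assignment $\fa'\mapsto\phi(\fa')$ and the nesting $\Mpair{\fa'}\subseteq\Mpair{\phi(\fa')}$ explicit (with the case split into faces on $\partial\el$ and faces interior to $\el$), whereas the paper simply asserts the existence of such a face $\fa$ of $\el$.
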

\begin{proof}
For every $\fa$ face of $\el$, denote by $v_\fa$ the 
best approximation in $\LFEspace{\Mpair{\fa}}$ to $u$ with respect to the 
$\tnorm{\cdot}_{\Mpair{\fa}}$-norm. For every 
$\fa'\subset\el'\in\mathcal{L}(\tree)$, there exists $\fa\subset\el$ such that 
$\Mpair{\fa'}\subset\Mpair{\fa}$ and so
\[
 \tnorm{u-v_{\fa'}}_{\Mpair{\fa'}}
 \leq
 \tnorm{u-v_{\fa}}_{\Mpair{\fa'}}.
\]
Since a given point in $\Mpair{\fa}$ is contained in at most $d$ pairs 
$\Mpair{\fa'}$, we thus obtain
\begin{align*}
 \sum_{\el'\in\mathcal{L}(\tree)}e(\el')
 &\leq
 \sum_{\el'\in\mathcal{L}(\tree)}
  \sum_{\phantom{\mathcal{L}}\fa'\text{ face of }\el'}
   \tnorm{u-v_{\fa'}}^2_{\Mpair{\fa'}}
\\
 &\leq
 2 \sum_{\fa'\text{ face of }\mathcal{L}(\tree)}
  \tnorm{u-v_{\fa}}^2_{\Mpair{\fa'}}
 \leq
 2d \sum_{\fa\text{ face of }\el}
  \tnorm{u-v_{\fa}}_{\Mpair{\fa}}^2
\\&=2d\,e(\el). \qedhere
\end{align*}
\end{proof}
Modifying the proof of Theorem \ref{T:pair_dec}, we can relate its 
corresponding global error functional with the best error in $\FEspace$ with 
respect to the reaction-diffusion norm \eqref{rd-norm}.
\begin{thm}[Localization for tree approximation]
\label{T:Mpair}
Let $\Err$ be given by \eqref{GErrFct} with \eqref{def_e}.  For any 
$u\in H^1(\Omega)$ and any conforming mesh $\tri\in\mathbb{T}_c$, it holds
\begin{equation*}
 \inf_{v\in \FEspace^{\ell,0}(\tri)}\tnorm{u-v}_\Omega
 \approx
 \Err(\tree)^{1/2},
\end{equation*}
where $\tree\subset\mtree$ is the tree corresponding to $\tri$ and the hidden 
constants depend only on the polynomial degree $\ell$, the dimension $d$, the 
mesh $\tri_0$, but not on $\veps$.
\end{thm}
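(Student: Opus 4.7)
The plan is to prove the two inequalities separately. For the lower bound $\Err(\tree)^{1/2}\lesssim \inf_v\tnorm{u-v}_\Omega$, I will exploit the $d$-finiteness of $\SetMpair=\{\Mpair{\fa}\}_{\fa\in\Afaces}$ from Remark~\ref{R:Mpair}(iv). For any $v\in\FEspace^{\ell,0}(\tri)$ and any face $\fa$, the restriction $v|_{\Mpair{\fa}}$ lies in $\LFEspace{\Mpair{\fa}}$ (it is still a continuous piecewise polynomial on the, possibly virtual, bisection of $\tri$ that resolves $\Mpair{\fa}$), so $\inf_{P\in\LFEspace{\Mpair{\fa}}}\tnorm{u-P}_{\Mpair{\fa}}^2 \leq \tnorm{u-v}_{\Mpair{\fa}}^2$. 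Rearranging \eqref{GErrFct}--\eqref{def_e} as a sum over faces, with each interelement face counted twice and each boundary face once, yields $\Err(\tree)\leq 2\sum_{\fa\in\Afaces}\tnorm{u-v}^2_{\Mpair{\fa}} \leq 2d\,\tnorm{u-v}^2_\Omega$; infimizing over $v$ closes this half.

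For the upper bound $\inf_v\tnorm{u-v}_\Omega\lesssim\Err(\tree)^{1/2}$, I will adapt \S\ref{S:eps-norm} from $\SetPair$ to $\SetMpair$. For every $K\in\tri$, Remark~\ref{R:Mpair}(iii) furnishes a face $\fa_K\in\Afaces$ with $\Mpair{\fa_K}\supseteq K$, and I would define $\interp^\veps_\star$ through \eqref{D:interp} with the operator $\mathcal{P}_{\fa_K}$ replaced by the best approximation in $\LFEspace{\Mpair{\fa_K}}$ with respect to $\tnorm{\cdot}_{\Mpair{\fa_K}}$, retaining Scott--Zhang averages at shared nodes. Proposition~\ref{P:key_estimate} applied with $\mathcal{W}=\SetMpair$ then reduces matters to verifying the local near-best property (Definition~\ref{interp_prop}) for $\interp^\veps_\star$.

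Interior nodes contribute zero as in \eqref{interior_nodes}, so the main work is to apply Proposition~\ref{P:path} with $\mathcal{W}=\SetMpair$ at each shared node $z\in\mathcal{N}_K\cap\Sigma$. I would build the path from the face-connected chain $K_1=K,\dots,K_n=\elAv$ around $z$ (which exists because any conforming bisection refinement of $\tri_0$ satisfying the matching condition is face-connected in the sense of Definition~\ref{tri_prop}), choosing at each step a face $\fa_j\ni z$ with the appropriate refinement-edge position. Remark~\ref{R:Mpair}(i)--(ii) then guarantees that consecutive $\Mpair{\fa_j},\Mpair{\fa_{j+1}}$ share a bisection child of some $K_j\in\tri$ that contains $z$ and has measure at least $|K_j|/2$, so Proposition~\ref{P:path} applies with $\nu=1/2$. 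From here, the chain of inequalities \eqref{byL2}--\eqref{fin_eps} of \S\ref{S:eps-norm} delivers the local near-best bound with constants of the same form, and Proposition~\ref{P:key_estimate} yields $\inf_v\tnorm{u-v}_\Omega\lesssim \bigl(\sum_{\fa\in\Afaces}\inf_{P}\tnorm{u-P}^2_{\Mpair{\fa}}\bigr)^{1/2}\leq\Err(\tree)^{1/2}$, the last inequality because every face of $\Afaces$ is a face of at least one $K\in\tri$.

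The hard part will be the coherent construction of the path around $z$: unlike for $\pair{\fa}$, two consecutive $\Mpair{\fa_j},\Mpair{\fa_{j+1}}$ may a priori intersect only along the bisection hyperplane of two distinct virtual children of the same $K_j$, that is, along a set of measure zero. Showing that one can always select the connecting faces so that their refinement-edge patterns around $z$ are compatible and the intersection is a full bisection child of some $K_j\ni z$ is the technical crux of the argument. Once this is in place, $\#A_K$ and the covering-overlap counts are controlled by $\Bar{n}$ from \eqref{Barn}, which under bisection is bounded in terms of $\tri_0$ alone; all constants therefore depend only on $\ell$, $d$, and the shape and valence parameters of $\tri_0$, yielding the claimed $\veps$-robust equivalence.
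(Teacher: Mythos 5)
Your overall strategy coincides with the paper's: the lower bound via the $d$-finiteness of $\{\Mpair{\fa}\}_{\fa\in\Afaces}$, and the upper bound by rerunning \S\ref{S:eps-norm} with minimal pairs through Propositions \ref{P:key_estimate} and \ref{P:path}. However, you correctly identify the technical crux --- constructing an admissible path of minimal pairs around a shared node $z$ --- and then leave it unresolved (``Once this is in place\dots''). That is precisely the step where a blind adaptation of \S\ref{S:eps-norm} breaks down, so as written the argument has a genuine gap. Moreover, the way you propose to close it (selecting the connecting faces so that their ``refinement-edge patterns around $z$ are compatible'' and two consecutive minimal pairs intersect in a full bisection child) is not obviously achievable: the faces $\inters{\fa}_i=\el_i\cap\el_{i+1}$ of the face-connected chain are dictated by the chain itself, and Remark \ref{R:Mpair}(i) shows that whether $\el_{i+1}\subseteq\Mpair{\inters{\fa}_i}$ holds depends on the position of the refinement edge of $\el_{i+1}$, which you cannot choose. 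So $\Mpair{\inters{\fa}_i}$ and $\Mpair{\inters{\fa}_{i+1}}$ may indeed meet only in a set of measure smaller than $|\el_{i+1}|/2$ (or of measure zero), and no reselection of these faces repairs this.

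The paper's fix is different and simpler than what you sketch: instead of forcing consecutive interelement-face pairs to overlap well, it \emph{inserts an auxiliary subdomain} whenever they do not. Concretely, for each element $\el_{i+1}$ of the chain one fixes, via Remark \ref{R:Mpair}(iii), a face $\fa_{\el_{i+1}}\in\Afaces$ (possibly a boundary face) with $\Mpair{\fa_{\el_{i+1}}}\supseteq\el_{i+1}$; if $|\Mpair{\inters{\fa}_i}\cap\Mpair{\inters{\fa}_{i+1}}|<|\el_{i+1}|/2$, the path visits $\Mpair{\fa_{\el_{i+1}}}$ between them. Then each consecutive intersection contains $\Mpair{\inters{\fa}_i}\cap\el_{i+1}$, which by Remark \ref{R:Mpair}(ii) is a simplex containing $z$ of measure at least $|\el_{i+1}|/2$, so Proposition \ref{P:path} applies with $\nu=1/2$. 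Two consequences of this device that your sketch misses: the covering must be indexed over \emph{all} faces $\Afaces$, not just the interelement ones (the auxiliary face $\fa_{\el_{i+1}}$ may lie on $\partial\Omega$, which is why \eqref{def_e} sums over all faces of $\el$), and the set $\skeleton{\el}$ of faces entering the local bound \eqref{fin_epsMp} must be enlarged accordingly --- though its cardinality remains controlled by $\Bar{n}$ and $d$, so the final constants still depend only on $\ell$, $d$ and $\tri_0$, as you claim.
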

\begin{proof}
Writing $\FEspace := \FEspace^{\ell,0}(\tri)$ for short, the results follows 
from the following inequalities:
\begin{equation}
\label{E:Mpair_dec}
 \inf_{v\in \FEspace} \tnorm{u-v}_\Omega
 \leq
 C \left(
  \sum_{\fa\in\Afaces}
   \inf_{P\in\FEspace|_{\Mpair{\fa}}} \tnorm{u-P}_{\Mpair{\fa}}
 \right)^{1/2}
 \leq
 Cd  \inf_{v\in \FEspace} \tnorm{u-v}_\Omega,
\end{equation}
where $C$ depends on $\ell$, $d$, and the shape parameter $\ShapePar_\tri$.  In 
fact, $\Err(\tree)$ regroups only the terms of the sum inside the square root 
of \eqref{E:Mpair_dec} and the shape parameter $\ShapePar_\tri$ is bounded in 
terms of the $\ShapePar_{\tri_0}$; see, e.g.,
\cite[Corollary 4.1]{Nochetto.Siebert.Veeser:09}.

The proof of \eqref{E:Mpair_dec} resembles the one of Theorem \ref{T:pair_dec} 
and we restrict ourselves to emphasize the differences.

In order to define the interpolation operator, we fix $\el_z$ for every 
$z\in\mathcal{N}\cap\Sigma$ as before but, for every $\el\in\tri$, we fix 
$\fa_\el$ such that $\Mpair{\fa_\el}\supset\el$.  The latter allows to 
choose $\omega=\Mpair{\fa_\el}$ in the verification of \eqref{hp1} but requires 
to incorporate the faces on the domain boundary $\partial\Omega$ in the 
localization \eqref{E:Mpair_dec}.  The interpolation operator 
$\interp^\veps_\star$ is then given by \eqref{D:interp} where 
$\mathcal{P}_{\fa_\el}$ is replaced by $\mathcal{R}^\veps_{\star,\fa_\el}$, the 
best approximation operator associated with $S|_{\Mpair{\fa_\el}}$ and the 
reaction-diffusion norm.

Next, we show that $\interp^\veps_\star$ is locally near best with respect to 
the covering $\mathcal{W^*} := \{\Mpair{\fa}\}_{\fa\in\Afaces}$ in Remark 
\ref{R:Mpair} (iv).  To this end, we fix $\el\in\tri$, write $\fa:=\fa_\el$ and 
choose $\omega=\Mpair{\fa}$ in \eqref{hp1}.  Again, we have 
$|\mathcal{R}^\veps_{\star,\fa} u(z) - \interp^\veps_\star u(z)|=0$ for 
$z\in\mathcal{N}_{\accentset{\circ}{\el}}$ and exploit Proposition \ref{P:path} 
for $z\in\mathcal{N}_\el\cap\Sigma$.  For the covering $\mathcal{W}_\star$ 
however, 
the construction of the path of subdomains is more involved.

Since $\tri$ is face-connected, there exists a sequence 
$\{\el_i\}_{i=1}^{r}$ of elements of $\tri$ such that $\el_1=\el$, 
$\el_r=\elAv$, and each intersection $\el_i\cap\el_{i+1}\in\faces$ is an 
interelement face containing $z$.  We write 
$\inters{\fa}_i:=\el_i\cap\el_{i+1}$ 
for the intersections and, for every element $\el_i$, we choose a face 
$\fa_{\el_i}$ such that $\Mpair{\fa_{\el_i}}\supseteq\el_i$.  We then construct 
the path $\{\omega_j\}_{j=1}^n:=\{\Mpair{\fa_j}\}_{j=1}^n$ of subdomains by 
means of the following algorithm:
\begin{itemize}
 \item[]
 \item[] $\fa_1:=\fa$, $j:=1$
 \item[] \texttt{if} $\fa_1\neq\inters{\fa}_1$ \texttt{then}
 \begin{itemize}
  \item[] $\fa_2:=\inters{\fa}_1$, $j:=j+1$
 \end{itemize}
 \item[] \texttt{endif}
 \item[] \texttt{for} $i=1,\dots,r-2$ \texttt{do}
  \begin{itemize}
   \item[] \texttt{if} 
$|\Mpair{\inters{\fa}_i}\cap\Mpair{\inters{\fa}_{i+1}}|\geq |\el_{i+1}|/2$
\texttt{then}
   \begin{itemize}
    \item[] $\fa_{j+1}:=\inters{\fa}_{i+1}$, $j:=j+1$
   \end{itemize}
   \item[] \texttt{else}
   \begin{itemize}
    \item[] $\fa_{j+1}:=\fa_{\el_{i+1}}$, $\fa_{j+2}:=\inters{\fa}_{i+1}$,
$j:=j+2$
   \end{itemize}
   \item[] \texttt{endif}
  \end{itemize}
  \item[] \texttt{endfor}
  \item[] $\fa_{j+1}:=\fa_{\el_r}$
  \item[]
\end{itemize}
In view of Remark \ref{R:Mpair}, this path is admissible for Proposition 
\ref{P:path} with $\nu=1/2$.  We therefore can follow the lines in the proof of 
Theorem \ref{T:pair_dec}, replacing pairs by minimal pairs.  Taking into 
account $\nu=1/2$, $h_{\Mpair{\fa'}}\leq h_{\pair{\fa'}}$ and the fact that 
more faces are involved, we derive

\begin{equation}
\label{fin_epsMp}
\sum_{z\in \mathcal{N}_{\el}}
  |\mathcal{R}^\veps_{\star,\fa} u(z) - \interp^\veps_\star u(z)|
  \tnorm{\phi_z}_{\el}
 \leq
 \sqrt{2} \mu_\tri M_\veps 
 \sum_{\widetilde{\fa}\in\skeleton{\el}} 
  \tnorm{\mathcal{R}^\veps_{\star,\widetilde{\fa}}u-u}_{\Mpair{\widetilde{\fa}
} } .
\end{equation}
with $\skeleton{\el}:=\Skeleton{\el}\cup\{\fa\in\Afaces: 
\fa\subset\partial\Patch{\el}\}$.  Since $\#\skeleton{\el}$ and 
$\#\{\el\in\tri: \fa\in\skeleton{\el}\}$ are still bounded in terms of 
$\Bar{n}$ and $d$, Proposition \ref{P:key_estimate} ensures \eqref{E:Mpair_dec}.
\end{proof}
Let us illustrate the usefulness of the error functional proposed in 
\eqref{def_e}.  To this end, consider the Modified Second Algorithm in 
\cite[\S7]{Binev.DeVore:04} with \eqref{def_e}; one could simplify the 
so-called 'new subdivision rule' therein in our context.  A combination of 
Proposition~\ref{P:subadditivity}, \cite[Theorem 7.2]{Binev.DeVore:04} and 
Theorem~\ref{T:Mpair} yields that any output mesh $\tri$ is near best in the 
following sense:  
\[
 \inf_{v\in\FEspace^{\ell,0}(\tri)} \tnorm{u-v}_\Omega
 \leq
 C E_{c\#\tri},
\]
where
\[
 E_n
 :=
 \min\left\{
  \inf_{v\in\FEspace^{\ell,0}(\tri')} \tnorm{u-v}_\Omega
  : \tri'\in\mathbb{T}_c, \#\tri'\leq n
 \right\}
\]
and $C\geq1$, $c\in(0,1]$ are constants depending on $d$, $\ell$, $\tri_0$ 
but not on $\veps$.  Moreover, counting an evaluation of $e$ with 1 operation, 
the algorithm uses less than $O(\#\tri+\#\tri_0)$ operations to create $\tri$.

\section{Robust localization with Dirichlet boundary conditions}
\label{S:H10}
%
%
In this section we briefly discuss the modifications of our results if the 
boundary values of the target function are imposed on the approximants.  This 
is 
of interest, for example, when conforming finite element methods are applied to 
the homogeneous Dirichlet problem of the reaction-diffusion equation.
For simplicity, we consider target functions in $H^1_0(\Omega)$ approximated by 
elements from $\FEspace_0 := S^{\ell,0}_0(\tri) := S^{\ell,0}(\tri)\cap 
H^1_0(\Omega)$.

Considering $u_\veps=\min\{1,\veps^{-1/2}\dist(\partial\Omega)\}$ as in 
\S\ref{S:counterexample} reveals the following: if any local best error on a 
subdomain $\omega$ with positive $(d-1)$-dimensional Hausdorff measure 
$|\partial\omega\cap\partial\Omega|>0$ does not incorporate  the boundary 
condition, robustness cannot hold.  This suggests the following modification of 
the setting for, e.g., Theorem \ref{T:pair_dec}.  We 
associate to every $\fa\in\faces$ the local space
\begin{equation}
\label{BC:LocSpaces}
 \FEspace_\fa
 :=
 \begin{cases}
  \FEspace_0|_{\pair{\fa}} &\text{if }\fa\in\Bfaces,
 \\
  \FEspace|_{\pair{\fa}} &\text{otherwise,} 
 \end{cases}
\end{equation}
where $\Bfaces:=\{\fa'\in\Afaces: \pair{\fa'} \text{ has a face on 
}\partial\Omega\}$.  Notice that we have $\FEspace_\fa \neq 
\FEspace_0|_{\pair{\fa}}$ if and only if $\fa\not\in\Bfaces$ but 
$\pair{\fa}\cap\partial\Omega\neq\emptyset$.  This however, at least, 
does not create a problem for the second inequality in Theorem \ref{T:pair_dec} 
since $\FEspace_\fa \supset \FEspace_0|_{\pair{\fa}}$.

The interpolation operator $\interp^\veps_0$ now has to vanish on the domain 
boundary $\partial\Omega$.   To this end, the elegant approach of averaging on 
boundary faces in \cite{Scott.Zhang:90}, which has been adopted in 
\cite{Veeser:13}, cannot be applied because the use of traces does not allow 
for \eqref{boundary_nodes} and so for robustness.  We therefore use the 
original approach of suppressing the boundary nodes in P.\ Cl\'ement 
\cite{Clement:75} and set
\[ \textstyle
 \interp^\veps_0 u
 :=
 \sum_{z\in\mathcal{N}_\Omega} u_z \phi_z
\]
with $u_z$ as is \eqref{D:nodalval}, where $\mathcal{P}_{\fa_\el} u$ is 
replaced by the best approximation $\mathcal{R}_{0,\fa_\el}^\veps u$ in 
$\FEspace_{\pair{\fa_\el}}$ to $u$ with respect to the reaction-diffusion norm.
Consequently, $\interp^\veps$ and $\interp^\veps_0$ differ only at boundary 
nodes and at nodes invoking a face in $\Bfaces$.

Nevertheless the counterpart of \eqref{key-est-L2} holds.  To see this, 
consider $\el\in\tri$ such that $\el\cap\partial\Omega$ is non-empty and 
notice that only the boundary nodes $z\in\mathcal{N}_\el\cap\partial\Omega$ are 
critical.  If $\el$ has a face on the domain boundary $\partial\Omega$, then 
the 
same holds for $\pair{\fa_\el}$ and so we have 
$u_z=0=\mathcal{R}_{0,\fa_\el}^\veps u(z)$ for all boundary nodes
$z\in\mathcal{N}_\el\cap\partial\Omega$.  
If the intersection $\el\cap\partial\Omega$ is only a $k$-face with $k< d-1$, 
we 
can find a path $\{\pair{\fa_j}\}_{j=1}^n$ of pairs such that 
$\pair{\fa_1}\supset\el$ and $\pair{\fa_n}$  has a face on $\partial\Omega$.  
Hence we can bound $|\mathcal{R}_{0,\fa_\el}^\veps 
u(z)|=|\mathcal{R}_{0,\fa_\el}^\veps u(z)-\mathcal{R}_{0,\fa_n}^\veps u(z)|$ as 
in the proof of Proposition \ref{P:path} for all boundary nodes 
$z\in\mathcal{N}_\el\cap\partial\Omega$. 

Inequality \eqref{key-est-H1} hinges on a Poincar\'e-type inequality on pairs.  
If $\fa'\in\Bfaces$, then \eqref{mean_value_RD} may not be correct and the 
counterpart of \eqref{key-est-H1} is built on the following Friedrichs' 
inequality, which is a tailor-made variant of Lemma 5.1 in 
\cite{Veeser.Verfuerth:09}.
\begin{lem}[Friedrichs inequality on element pairs]
Let $\omega$ be the union of two adjacent elements $\el_1$, $\el_2$ sharing a 
face $\fa=\el_1\cap\el_2$.  Moreover let $\fa_0$ be a face of $\el_1$. For 
every 
$v\in H^1(\omega)$ with $\int_{\fa_0}v=0$, it holds
\[
 \norm{v}_{\omega}\
 \leq
 C_{F,\omega} h_\omega \norm{\nabla v}_{\omega},
\]
where $h_\omega:=\max\{\diam\el_1,\diam\el_2\}$ and $C_F$ depends on $d$ and 
the shape parameter of $\{\el_1,\el_2\}$.
\end{lem}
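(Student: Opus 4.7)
The plan is to bound $\norm{v}_{\el_1}$ and $\norm{v}_{\el_2}$ separately, combining the classical Poincar\'e inequality on each convex simplex with the trace identity \eqref{e:trace} to control the element means. The hypothesis $\int_{\fa_0}v=0$ directly provides information only on $\el_1$, so the key structural idea is to propagate it across the shared face $\fa$ to obtain control on $\el_2$.

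On $\el_1$, the classical Poincar\'e inequality on convex domains (see \cite{Bebendorf:03,Payne.Weinberger:60}) controls $v$ minus its mean over $\el_1$ by $\frac{h_{\el_1}}{\pi}\norm{\nabla v}_{\el_1}$. To handle the mean itself, I apply \eqref{e:trace} to $\fa_0$ and use $\int_{\fa_0}v=0$, which yields
\[
 \frac{1}{|\el_1|}\int_{\el_1} v
 = -\frac{1}{d|\el_1|}\int_{\el_1}\mathbf{q}_{1,\fa_0}\cdot\nabla v.
\]
Since $|\mathbf{q}_{1,\fa_0}|\leq h_{\el_1}$, Cauchy--Schwarz bounds this modulus by $\frac{h_{\el_1}}{d|\el_1|^{1/2}}\norm{\nabla v}_{\el_1}$. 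Adding the two contributions gives $\norm{v}_{\el_1}\leq C h_{\el_1}\norm{\nabla v}_{\el_1}$.

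On $\el_2$, the Poincar\'e inequality again reduces matters to bounding the element mean $\frac{1}{|\el_2|}\int_{\el_2}v$. Writing \eqref{e:trace} for the shared face $\fa$ on both sides and equating the two resulting expressions for $\frac{1}{|\fa|}\int_\fa v$ produces
\[
 \frac{1}{|\el_2|}\int_{\el_2} v
 = \frac{1}{|\el_1|}\int_{\el_1} v
 + \frac{1}{d|\el_1|}\int_{\el_1}\mathbf{q}_{1,\fa}\cdot\nabla v
 - \frac{1}{d|\el_2|}\int_{\el_2}\mathbf{q}_{2,\fa}\cdot\nabla v.
\]
The first summand is already controlled by the step above, and the remaining two are handled by Cauchy--Schwarz. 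Multiplying by $|\el_2|^{1/2}$ to return to the $L^2$-norm on $\el_2$ introduces the ratio $|\el_2|^{1/2}/|\el_1|^{1/2}$, which, together with the ratios $h_{\el_i}/h_\omega$, is controlled by the shape parameter of $\{\el_1,\el_2\}$. Squaring and adding the two elementwise bounds delivers the claim.

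I expect the main obstacle to be precisely this last step: because the hypothesis constrains $v$ only on a single face of $\el_1$, any information on $\el_2$ must traverse the shared face $\fa$, and this propagation unavoidably involves the relative sizes of the two elements. This is the reason why $C_F$ must depend on the shape parameter of $\{\el_1,\el_2\}$, in contrast to the pure Poincar\'e statement of Lemma~\ref{R:poin}, where the constant is universal.
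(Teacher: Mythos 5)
Your proof is correct and rests on exactly the same ingredients as the paper's: the classical Poincar\'e inequality on each convex simplex, the trace identity \eqref{e:trace} applied to $\fa_0$ (to exploit $\int_{\fa_0}v=0$) and to the shared face $\fa$ (to propagate control from $\el_1$ to $\el_2$), with the shape-parameter dependence entering through the volume ratio $|\el_2|^{1/2}/|\el_1|^{1/2}$ just as in the paper. The only difference is organizational --- you decompose around the two element means, whereas the paper decomposes around the mean over the shared face $\fa$ and collapses the two trace identities on $\el_1$ into the single formula $\frac{1}{|\fa|}\int_\fa v=\frac{1}{d|\el_1|}\int_{\el_1}(z_{\fa_0}-z_\fa)\cdot\nabla v$ --- which does not change the substance of the argument.
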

\begin{proof}
Adding and subtracting the mean value over the common face $\fa$, we obtain
\[
\norm{v}_{\omega}\leq\norm{v-\frac{1}{|\fa|}\int_\fa 
v}_\omega+|\omega|^{1/2}\left|\frac{1}{|\fa|}\int_{\fa}v\right|
\]
We treat the first term on the right-hand side as in the proof of Lemma 
\ref{R:poin}.  For the second term, we use twice the trace identity 
\eqref{e:trace} to get
\[
 \frac{1}{|\fa|}\int_\fa v
 =
 \frac{1}{d|\el_1|} \int_{\el_1}(z_{\fa_0}-z_{\fa})\cdot\nabla v,
\]
where $z_{\fa_0}$ and $z_{\fa}$ are the vertices opposite to $\fa_0$ and $\fa$, 
respectively.  Hence
\begin{align*}
 |\omega|^{1/2} \left|\frac{1}{|\fa|}\int_{\fa}v\right|
 &\leq 
 \left( |\el_1|^{1/2}+|\el_2|^{1/2} \right)
  \frac{|z_{\fa_0}-z_{\fa}|
  \norm{\nabla v}_{\el_1}}{d|\el_1|^{1/2}}
\\
 &\leq 
 \left(
  \frac{\diam(\el_1)}{d} + 
  \frac{|\el_2|^{1/2}|z_{\fa_0}-z_{\fa}|}{d|\el_1|^{1/2}}
 \right) \norm{\nabla v}_{\el_1}.
\end{align*}
Since
\[
 \frac{|\el_2|^{1/2}|z_{\fa_0}-z_{\fa}|}{|\el_1|^{1/2}}
 \leq 
 \frac{|\fa|^{1/2}\diam(\el_2)^{1/2}|z_{\fa_0}-z_{\fa}|}%
  {|\fa|^{1/2}\dist(z_\fa,\fa)^{1/2}}
 \leq 
 \frac{|z_{\fa_0}-z_{\fa}|^{1/2}}{\dist(z_\fa,\fa)^{1/2}} h_\omega
\]
we conclude
\[
\norm{v}_\omega\leq\left(C_P+\frac{1}{d}+\frac{|z_{\fa_0}-z_{\fa}|^{1/2}}{d\,
\dist(z_\fa,\fa)^{1/2}}\right)h_\omega\norm{\nabla v}_\omega. \qedhere
\]
\end{proof}
We thus obtain the following variant of Theorem \ref{T:pair_dec}.
\begin{thm}[Robust localization with boundary condition]
Using \eqref{BC:LocSpaces}, there holds 
\begin{equation*}
 \inf_{v\in S_0(\tri)} \tnorm{u-v}_\Omega
 \approx 
 \left(
  \sum_{\fa\in\faces}
   \inf_{P\in\FEspace_{\pair{\fa}}} \tnorm{u-P}^2_{\pair{\fa}}
 \right)^{\frac12}
\end{equation*}
for every $u\in H^1_0(\Omega)$.  The hidden constants depend only on the 
polynomial degree $\ell$, the dimension $d$, the shape parameter 
$\ShapePar_\tri$, but not on $\veps$.
\end{thm}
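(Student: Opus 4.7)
The plan is to follow the line of argument used for Theorem \ref{T:pair_dec}, substituting each ingredient by the boundary-aware counterpart described just before the statement. The upper bound in the claimed equivalence is immediate: the inclusion $\FEspace_0|_{\pair{\fa}}\subseteq\FEspace_\fa$ gives $v|_{\pair{\fa}}\in\FEspace_\fa$ for every competitor $v\in\FEspace_0$, and the $(d+1)$-finiteness of $\SetPair$ then yields
\[
 \left(
  \sum_{\fa\in\faces}\inf_{P\in\FEspace_\fa}\tnorm{u-P}^2_{\pair\fa}
 \right)^{\frac12}
 \leq
 \sqrt{d+1}\,\inf_{v\in\FEspace_0}\tnorm{u-v}_\Omega.
\]

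For the non-trivial inequality I would apply Proposition \ref{P:key_estimate} to $\mathcal{W}=\SetPair$, the reaction-diffusion norm and the local best approximations $\BestApp{\pair\fa}=\RitzP{0,\fa}$ onto $\FEspace_\fa$. The task reduces to constructing an interpolation operator $\interp^\veps_0:H^1_0(\Omega)\to\FEspace_0$ that is locally near best in the sense of Definition \ref{interp_prop}. Following Cl\'ement, I set $\interp^\veps_0 u := \sum_{z\in\mathcal{N}_\Omega} u_z\phi_z$ with nodal values as in \eqref{D:nodalval}, where $\mathcal{P}_{\fa_\el}$ is replaced by $\RitzP{0,\fa_\el}$ and boundary nodes are simply suppressed. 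Whenever $\el$ has a face on $\partial\Omega$ one may take $\fa_\el\in\Bfaces$, so the boundary condition enforced by $\FEspace_\fa$ makes $\RitzP{0,\fa_\el}u(z)=0=u_z$ at every $z\in\mathcal{N}_\el\cap\partial\Omega$.

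To verify \eqref{hp1} for a fixed $\el\in\tri$, I would split $\mathcal{N}_\el$ into interior, interelement and boundary nodes. The first two cases proceed as in \S\ref{S:eps-norm}: the difference $|\RitzP{0,\fa_\el}u(z)-\interp^\veps_0 u(z)|$ vanishes at interior nodes, while face-connectedness of $\tri$ provides a path of pairs from $\pair{\fa_\el}$ to $\elAv$ along which Proposition \ref{P:path} applies with $\nu=1$. The genuinely new case is a boundary node $z\in\mathcal{N}_\el\cap\partial\Omega$ with $\fa_\el\notin\Bfaces$: here $\interp^\veps_0 u(z)=0$ and one must control $|\RitzP{0,\fa_\el}u(z)|$. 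I would build a face-connected path $\{\pair{\fa_j}\}_{j=1}^n$ from $\pair{\fa_\el}$ to some $\pair{\fa_n}$ with $\fa_n\in\Bfaces$, so that $\RitzP{0,\fa_n}u(z)=0$, and then write $\RitzP{0,\fa_\el}u(z)=\RitzP{0,\fa_\el}u(z)-\RitzP{0,\fa_n}u(z)$ to reuse the telescoping estimate of Proposition \ref{P:path}. This reproduces the counterpart of \eqref{key-est-L2} in the Dirichlet setting.

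The counterpart of \eqref{key-est-H1} requires converting $L^2$-errors $\norm{\RitzP{0,\fa_j}u-u}_{\pair{\fa_j}}$ into $H^1$-seminorm errors. For $\fa_j\notin\Bfaces$ the mean-value identity \eqref{mean_value_RD} holds and Lemma \ref{R:poin} applies unchanged. For $\fa_j\in\Bfaces$ that identity generally fails; here I would invoke the Friedrichs inequality on element pairs just proved, using a face $\fa_0\subset\partial\Omega$ of $\pair{\fa_j}$ on which both $u$ and $\RitzP{0,\fa_j}u$ vanish so that $\int_{\fa_0}(\RitzP{0,\fa_j}u-u)=0$ trivially. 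Combining the $L^2$- and $H^1$-bounds as in \eqref{fin_eps} and feeding the result into Proposition \ref{P:key_estimate} produces the first inequality, with constants controlled by $\ell$, $d$ and $\ShapePar_\tri$. The main obstacle is the path construction in case $\fa_\el\notin\Bfaces$: one must guarantee a face-connected sequence of pairs from $\pair{\fa_\el}$ to a boundary pair in $\Bfaces$ whose length and overlap are uniformly controlled by $\tri$, which is the mechanism that propagates the Dirichlet datum through the suppressed nodes without loss of $\veps$-robustness.
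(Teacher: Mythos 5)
Your proposal is correct and follows essentially the same route as the paper: the Cl\'ement-style suppression of boundary nodes, the automatic vanishing at boundary nodes when $\fa_\el\in\Bfaces$, the telescoping path of pairs to reach a boundary pair when $\fa_\el\notin\Bfaces$, and the replacement of the Poincar\'e inequality by the Friedrichs inequality on element pairs for $\fa\in\Bfaces$ are exactly the ingredients the paper uses. The only cosmetic difference is that you present $\fa_\el\in\Bfaces$ as a choice for elements with a boundary face, whereas it holds automatically since $\pair{\fa_\el}\supseteq\el$.
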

There holds a similar theorem where the covering 
$\{\pair{\fa}\}_{\fa\in\faces}$ of `normal' pairs is replaced by the one 
$\{\Mpair{\fa}\}_{\fa\in\Afaces}$ of minimal pairs.


\begin{thebibliography}{10}

\bibitem{Bebendorf:03}
{\sc M.~Bebendorf}, {\em A note on the {P}oincar\'e inequality for convex
  domains}, Z. Anal. Anwendungen, 22 (2003), pp.~751--756.

\bibitem{Binev.Dahmen.DeVore:04}
{\sc P.~Binev, W.~Dahmen, and R.~DeVore}, {\em Adaptive finite element methods
  with convergence rates}, Numer. Math., 97 (2004), pp.~219--268.

\bibitem{Binev.DeVore:04}
{\sc P.~Binev and R.~DeVore}, {\em Fast computation in adaptive tree
  approximation}, Numer. Math., 97 (2004), pp.~193--217.

\bibitem{Ciarlet:91}
{\sc P.~G. Ciarlet}, {\em Basic error estimates for elliptic problems}, in
  Handbook of Numerical Analysis, Vol.~II, P.~G. Ciarlet and J.-L. Lions, eds.,
  North-Holland, 1991, pp.~17--352.

\bibitem{Clement:75}
{\sc P.~Cl\'{e}ment}, {\em Approximation by finite element functions using
  local regularizaion}, Revue Francaise Automat. Informat. Recherche.
  Operationelle Ser. Rouge Anal. Num\'{e}r., 9 (1975), pp.~77--84.

\bibitem{Nochetto.Siebert.Veeser:09}
{\sc R.~H. Nochetto, K.~G. Siebert, and A.~Veeser}, {\em Theory of adaptive
  finite element methods: an introduction}, in Multiscale, nonlinear and
  adaptive approximation, Springer, Berlin, 2009, pp.~409--542.

\bibitem{Payne.Weinberger:60}
{\sc L.~E. Payne and H.~F. Weinberger}, {\em An optimal {P}oincar\'e inequality
  for convex domains}, Arch. Rational Mech. Anal., 5 (1960), pp.~286--292
  (1960).

\bibitem{Scott.Zhang:90}
{\sc L.~R. Scott and S.~Zhang}, {\em Finite element interpolation of nonsmooth
  functions satisfying boundary conditions}, Math. Comp., 54 (1990),
  pp.~483--493.

\bibitem{Stevenson:08}
{\sc R.~Stevenson}, {\em The completion of locally refined simplicial
  partitions created by bisection}, Math. Comp., 77 (2008), pp.~227--241
  (electronic).

\bibitem{Veeser:13}
{\sc A.~Veeser}, {\em Approximating gradients with continuous piecewise
  polynomial functions}, tech. rep., submitted.

\bibitem{Veeser.Verfuerth:09}
{\sc A.~Veeser and R.~Verf{\"u}rth}, {\em Explicit upper bounds for dual norms
  of residuals}, SIAM J. Numer. Anal., 47 (2009), pp.~2387--2405.

\end{thebibliography}
\end{document}